\documentclass{amsart}
\usepackage{bm}
\usepackage{amssymb,empheq,mathtools}           
\usepackage{graphicx,enumerate,xcolor}
\usepackage[colorlinks]{hyperref}
\usepackage{todonotes}
\usepackage{tikz-cd}
\usepackage{mathrsfs}

\newtheorem{mainthm}{Theorem}

\usepackage[capitalise]{cleveref}

\hypersetup{colorlinks,
citecolor=teal,
filecolor=green,
linkcolor=teal,
urlcolor=violet
}

\newcommand{\Z}{\mathbb{Z}}

\newcommand{\M}{\mathbb{M}}
\newcommand{\Q}{\mathbb{Q}}
\newcommand{\F}{\mathbb{F}}

\newcommand{\und}[1]{\underline{#1}}

\newcommand{\Ext}{\ensuremath{\mathrm{Ext}}}

\newcommand{\Smash}{\wedge}
\newcommand{\undpi}{\underline{\pi}}

\newcommand{\undk}{\underline{k}}

\newcommand{\undz}{\underline{\Z}}
\newcommand{\undt}{\underline{\F}_2}

\newcommand{\Rdual}[1]{{#1}^*}
\newcommand{\kdual}[1]{\ensuremath{{#1}^{\vee}}}

\newcommand{\overbar}[1]{\mkern 2mu\overline{\mkern-2.5mu#1\mkern-0.0mu}\mkern 1.5mu}
\newcommand{\overk}{\overbar{k}}
\newcommand{\overz}{\overline{\Z}}
\newcommand{\overR}{\overline{R}}

\DeclareMathOperator{\tr}{tr}
\DeclareMathOperator{\res}{res}
\DeclareMathOperator{\ev}{ev}
\DeclareMathOperator{\pt}{pt}
\DeclareMathOperator{\op}{op}

\DeclareMathOperator{\Hom}{Hom}
\newcommand{\intHom}{\Hom^*}
\DeclareMathOperator{\fib}{fib}
\DeclareMathOperator{\Mod}{Mod}
\DeclareMathOperator{\MMod}{--\Mod}

\DeclareMathOperator{\Mack}{Mack}
\DeclareMathOperator{\CohMack}{CohMack}

\newcommand{\cohmu}{\mu^{coh}}

\newtheorem{thm}{Theorem}[section]
\newtheorem{cor}[thm]{Corollary}
\newtheorem{prop}[thm]{Proposition}
\newtheorem{lem}[thm]{Lemma}

\theoremstyle{definition}
\newtheorem{defn}[thm]{Definition}

\newtheorem{exmp}[thm]{Example}

\newtheorem{rem}[thm]{Remark}

\title{$RO(G)$-graded cohomology of a point is self-injective}
\author[Hogle]{Eric Hogle}
\address[Hogle]{Gonzaga University, Spokane WA 99201, USA}
\email{hogle@gonzaga.edu}

\author[May]{Clover May}
\address[May]{School of Mathematics and The Heilbronn Institute for Mathematical Research,
University of Bristol, 
Bristol, BS8 1UG, UK}
\email{clover.may@bristol.ac.uk}

\author[Zou]{Foling Zou}
\address[Zou]{Academy of Mathematics and Systems Science, Chinese Academy of Sciences,
 55 East Zhongguancun Road, Beijing, 100190, China}
\email{zoufoling@amss.ac.cn}

\date{}

\begin{document}

\begin{abstract}
Generalizing known results for $G=C_2$ and constant $\underline{\F}_2$-coefficients, we show the $RO(G)$-graded Bredon cohomology of a point with constant $\underline{k}$-coefficients is self-injective whenever $G$ is a finite subgroup of $SO(3)$ and $k$ is a finite field. Self-injectivity holds more generally for the $RO(G)$-graded homotopy of the Eilenberg--MacLane spectrum associated to a $k$-linear commutative Green functor $M$, whenever the spectrum $HM$ is Anderson self-dual.
\end{abstract}

\maketitle
\tableofcontents

\section{Introduction}

We show that for a large number of cases of finite groups $G$ and coefficient commutative Green functors $M$, the $RO(G)$-graded Bredon cohomology of a point $H^\bigstar_G(\pt;M)$ is self-injective. 
Equivalently, we show the $RO(G)$-graded homotopy of the associated equivariant Eilenberg--MacLane spectrum $HM$ is self-injective. 
These results generalize the self-injectivity proved for $G=C_2$ and constant $\undt$-coefficients in \cite{CMay}, as well as the Mackey functor valued version in \cite{RaiesThesis}. 
The approach in both of those papers was to classify graded ideals and then apply a graded version of Baer's criterion. 
Our approach uses Anderson self-duality of equivariant Eilenberg--MacLane spectra. 
In particular, we show self-injectivity of a number of $RO(G)$-graded rings, without computing the rings themselves.

In general, computing $RO(G)$-graded Bredon cohomology of a point is challenging. 
Building on unpublished work of Stong for $G=C_2$ and Lewis for $G=C_p$ \cite{Lewis}, a number of works have focused almost entirely on such computations, see for example \cite{Zeng,Ng,KrizLu,YanC2nHF2,SikoraYanCp} (though this list is by no means exhaustive).  
Some of the known computations are so complicated as to be unwieldy. 
Yet these rings are fundamentally important to equivariant homotopy theory, for example playing a role in the landmark solution to the Kervaire invariant one problem in \cite{HHR}. 
In \cite{CMay}, self-injectivity was used to prove a structure theorem for the $RO(C_2)$-graded cohomology of finite $C_2$-CW complexes. 
Knowing that the coefficient ring is self-injective has the potential to aid computations of the ring itself, explain patterns and structures therein, and simplify computations for $RO(G)$-graded cohomology and homotopy of various spaces and spectra.

In what follows, we show that whenever the Eilenberg--MacLane spectrum of a $k$-linear commutative Green functor is Anderson self-dual, its $RO(G)$-graded homotopy groups are self-injective.

\begin{mainthm}[c.f.\ \cref{thm:Anderson_self_dual_self-injective}]
Let $G$ be a finite group and let $k$ be a finite field. 
Let $M$ be a finitely generated $k$-linear commutative Green functor such that the $G$-equivariant Eilenberg--MacLane spectrum $HM$ is Anderson self-dual, i.e.\
\[
I_{\Z}^G(HM) \simeq \Sigma^\beta HM
\]
for some $\beta \in RO(G)$. 
Then 
\begin{enumerate}
\renewcommand{\theenumi}{\alph{enumi}}
    \item the ring $\undpi_\bigstar^G (HM)$ is self-injective as a $\undpi_\bigstar^G (HM)$-module, and
    \item the ring $\pi_\bigstar^G (HM) (G/H)$ is self-injective as a $\pi_\bigstar^G (HM)(G/H)$-module for each $H \leq G$.
\end{enumerate}
\end{mainthm}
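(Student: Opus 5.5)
The plan is to deduce self-injectivity from the universal property of the Anderson dual, combined with the fact that over a finite field $k$ the $k$-linear dual $\Hom_k(-,k)$ is exact, so that $\Hom_k(R,k)$ is an injective $R$-module.

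\textbf{Step 1: the Anderson dual in degrees.} Write $R_\bigstar = \pi^G_\bigstar(HM)(G/H)$, and view $\undpi_\bigstar(HM)$ as a Green functor valued graded ring. Since $HM$ is $k$-linear, the Anderson dual $I_\Z^G(HM)$ agrees with the $k$-linear (Brown--Comenetz type) dual $I_k(HM)$. The relevant short exact sequence then degenerates because $\Ext_\Z(-,\Z)$ of a finite $k$-vector space is $\Hom(-,\Q/\Z)$ shifted by one, that is, $k$-linear duality $\Hom_k(-,k)$. This gives a natural isomorphism of $R_\bigstar$-modules
\[
\pi_{V}^H I_\Z^G(HM) \cong \Hom_k\bigl(\pi^H_{-V-1}(HM),k\bigr)
\]
for each $V \in RO(G)$, up to the standard sign and shift conventions. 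Here finiteness of each graded piece follows from $M$ being finitely generated, which in turn follows from a cellular argument on representation spheres. Assembled over all degrees, this identifies $\pi_\bigstar I_\Z^G(HM)$ with the graded $k$-dual $\Rdual{R_\bigstar}$, shifted in degree. The $R_\bigstar$-module structure must be checked to be the dual one, which holds because $I_\Z$ is an $HM$-module via the action on the source. The same holds levelwise as Mackey functors, with restriction and transfer dual to each other.

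\textbf{Step 2: use self-duality.} The hypothesis $I_\Z^G(HM)\simeq \Sigma^\beta HM$ should be an equivalence of $HM$-modules. If it is only given as spectra, I would argue that an equivalence of spectra between these objects can be upgraded to an equivalence of $HM$-modules, since both sides are free of rank one over $HM$. This step is where I expect the main subtlety. Granting it, $R_\bigstar$ is isomorphic, as a graded $R_\bigstar$-module, to a regrading of the graded $k$-dual $\Hom_k(R_\bigstar,k)$.

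\textbf{Step 3: injectivity of the graded dual.} For any commutative graded $k$-algebra $A$, there is an adjunction
\[
\Hom_A\bigl(N,\Hom_k(A,k)\bigr)\cong \Hom_k(N,k)
\]
in the graded sense, taking degree zero parts. Since $k$ is a field, $\Hom_k(-,k)$ is exact, so $\Hom_k(A,k)$ is injective in graded $A$-modules. Regrading by $\beta$ preserves injectivity. Therefore $R_\bigstar$ is graded self-injective, which proves part (b).

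For part (a), run the same argument in the category of $\undpi_\bigstar(HM)$-modules in graded Mackey functors. The dual Mackey functor $\kdual{\underline{N}}$ is formed by applying $\Hom_k(-,k)$ levelwise and swapping restriction with transfer. The key input is a Mackey-functor version of the adjunction above, with the box product replacing the tensor product, together with exactness of levelwise duality. Both are formal once the internal Hom of Mackey functors is written out. The main obstacle will be verifying this Mackey-functor adjunction with the Green functor module structures in place, along with the module-equivalence upgrade in Step 2.
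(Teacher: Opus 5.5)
Your proposal follows the paper's proof. The sequence \eqref{eq:sesMackey} collapses because the homotopy is $k$-linear, giving $\undpi_{-\bigstar-1}^G(I_\Z^G HM)\cong\Hom_k(\undpi_\bigstar^G(HM),k)$ as modules. The self-duality hypothesis turns this into $\Hom_k(\undpi_\bigstar^G(HM),k)\cong\undpi_{-\bigstar-1-\beta}^G(HM)$. The tensor--hom adjunction (\cref{lem:tensor-hom}) then makes the dual injective. For (a) you are more careful than the paper, which simply applies \cref{lem:tensor-hom} to $\undpi_\bigstar^G(HM)$. Your Mackey-functor version does work: for a Green functor $R$ and an $R$-module $N$, the assignment $f\mapsto\bigl(n\mapsto f_G(n)(1)\bigr)$ is an isomorphism $\Hom_R(N,\kdual{R})\cong\Hom_k(N(G/G),k)$. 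Its inverse is $\phi\mapsto\bigl(n\mapsto\phi(\tr_H^G(-\cdot n))\bigr)$ on $N(G/H)$, using Frobenius reciprocity, and the right-hand side is exact in $N$.

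What needs correcting is your fallback in Step 2. Upgrading a spectrum-level equivalence to an $HM$-module equivalence ``because both sides are free of rank one'' is circular: that $I_\Z^G(HM)$ is free of rank one over $HM$ is exactly what the hypothesis is supposed to supply. The upgrade is also false in general. Take $G=e$ and $A=k[x,y]/(x,y)^2$. The homotopy is torsion, so $I_\Z(HA)\simeq\Sigma^{-1}H\kdual{A}$, and $\kdual{A}\cong A$ as abelian groups, so $I_\Z(HA)\simeq\Sigma^{-1}HA$ as spectra. Yet $A$ has a two-dimensional socle and is not self-injective; equivalently, $\kdual{A}$ needs two generators as an $A$-module.

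So the theorem must be read with $I_\Z^G(HM)\simeq\Sigma^\beta HM$ an equivalence of $HM$-modules. This is how the paper uses it when it calls the resulting isomorphism one of $\undpi_\bigstar^G(HM)$-modules. You should drop the upgrade and assume this. In the paper's main application nothing is lost. Both $\Sigma^{3-V}H\undk$ and $H\overk$ are Eilenberg--MacLane $H\undk$-modules, hence classified by $\undpi_0$ as $\undk$-modules. Since $\und{A}_k\to\undk$ is levelwise surjective, a Mackey functor admits at most one $\undk$-module structure. So there the spectrum-level equivalence is automatically one of $H\undk$-modules.
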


Following \cite{Zeng} we show that $H\undk$ is Anderson self-dual when $G=C_n$ is cyclic and $\undk$ is the constant Mackey functor. Using an argument from \cite{HZAnderson}, we show more generally $H\undk$ is Anderson self-dual when $G \subseteq SO(3)$ is finite.

\begin{mainthm}[c.f.\ \cref{thm:main_thm_SO3_self-inj}]
Let $G$ be any finite subgroup of $SO(3)$ and let $k$ be a finite field.  
Then $\undpi_\bigstar^G (H\undk)$ is self-injective.  
Moreover, $\undpi_\bigstar^G (H\undk)(G/H)$ is self-injective for every $H \leq G$.
\end{mainthm}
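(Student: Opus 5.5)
The plan is to deduce the second main theorem from the first, which we take as given. We then only need to show that for $G \subseteq SO(3)$ finite, the spectrum $H\undk$ is Anderson self-dual: there should be some $\beta \in RO(G)$ with $I_{\Z}^G(H\undk) \simeq \Sigma^\beta H\undk$. The constant Green functor $\undk$ is commutative, $k$-linear and finitely generated, since each value is $k$. Granting self-duality, part (a) of the first theorem gives self-injectivity of $\undpi_\bigstar^G(H\undk)$, and part (b) gives it for each $\undpi_\bigstar^G(H\undk)(G/H)$.

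For self-duality, the natural candidate is $\beta = V - 3$, or $3 - V$ depending on sign conventions, where $V = \mathbb{R}^3$ is the restriction of the standard representation of $SO(3)$ to $G$. I would argue in three steps.

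\textbf{Step 1.} Since $k$ is a finite field, $\Hom_{\Z}(k,\Q/\Z) \cong k$, and Ext terms vanish. Hence $I_{\Z}^G(H\undk) \simeq I_{\Q/\Z}^G(H\undk)$ is an Eilenberg--MacLane spectrum on the dual Mackey functor $\undk^* = \Hom(\undk,\Q/\Z)$. In this dual, restriction and transfer are swapped, so it is the ``coconstant'' Mackey functor $\undk^*$.

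\textbf{Step 2.} Following \cite{HZAnderson}, use the unit sphere $S(V) = S^2$. It is a closed $G$-manifold on which $G$ acts orientation-preservingly, since $G \subseteq SO(3)$. Because $G$ is finite and acts linearly, $S(V)$ carries a $G$-CW structure, and equivariant Poincaré/Atiyah duality identifies $D(S(V)_+)$ with $\Sigma^{-\tau}S(V)_+$. Smashing the cofiber sequence $S(V)_+ \to S^0 \to S^V$ with $H\undk$ and comparing with its Anderson dual should identify $\Sigma^{V}H\undk$ with a shift of $I_{\Z}H\undk$.

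Concretely, I would compute $\underline{H}_*(S(V);\undk)$ and its dual cellularly, using orientability and the fact that $\tau \oplus 1 \cong V|_{S(V)}$. The goal is to show that $\undpi_*(\Sigma^{-V}H\undk)$ is concentrated in one integer degree, say $-3$, where it equals $\undk^*$. The key inputs are that the orbit spaces $S(V)/H$ are rational homology $2$-spheres and that transfers act by index multiplication.

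\textbf{Step 3.} A $G$-spectrum whose homotopy Mackey functors are concentrated in a single degree is an Eilenberg--MacLane spectrum. So Steps 1 and 2 give $\Sigma^{3-V}H\undk \simeq H\undk^* \simeq I_{\Z}H\undk$. Checking the $H\undk$-module structure, so that the identification is one of modules rather than merely of spectra, uses that $\Sigma^{3-V}H\undk$ is an invertible $H\undk$-module.

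The main obstacle is Step 2: verifying that $\undpi_n(S^{-V}\wedge H\undk)$ vanishes outside one degree for every finite $G \subseteq SO(3)$, including the dihedral, $A_4$, $S_4$ and $A_5$ cases. I would first try the uniform route via $S(V)$ as a $2$-manifold. The top homology of each $S(V)/H$ is $k$ by orientation-preservation. The remaining homology of $S(V)/H$ and of $S(V)^H$ with $k$-coefficients must also be controlled, and here one uses that $S(V)/H$ is again a sphere, as the quotient of $S^2$ by an orientation-preserving finite group. Only if that fails would I fall back to case-by-case cellular computations, as in \cite{Zeng} for cyclic groups.
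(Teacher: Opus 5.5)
Your architecture matches the paper's. You feed Anderson self-duality of $H\undk$ into the first theorem, and obtain that self-duality from $\Sigma^{3-V}H\undk\simeq H\overk$ via the orbit spaces $S(V)/H$. However, Step 1 is off by a suspension. Every $\undpi^G_\alpha(H\undk)$ is a $k$-vector space, so $I^G_{\Q}(H\undk)\simeq 0$. The fiber sequence then gives $I^G_{\Z}(H\undk)\simeq\Sigma^{-1}I^G_{\Q/\Z}(H\undk)\simeq\Sigma^{-1}H\overk$. In the integral sequence \eqref{eq:sesMackey}, it is the $\Hom_{\Z}(-,\Z)$ terms that vanish, while $\Ext^1_{\Z}(\undk,\Z)\cong\overk$ survives in degree $-1$. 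Hence the equivalence $H\overk\simeq I_{\Z}^G(H\undk)$ in your Step 3 is false. The correct shift is $\beta=2-V$, not $3-V$ or $V-3$, and this is not a matter of sign conventions. For the statement at hand the error is harmless, since the first theorem only needs some $\beta$.

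The substantive gap is in Step 2: you assert that the surviving Mackey functor is $\overk$ but never supply the input that decides this. The Atiyah-duality detour is not needed. Contractibility of $D(V)$ and the cofiber sequence $S(V)_+\to D(V)_+\to S^V$ reduce everything to $\und{H}^*_G(S(V)_+;\undk)$. Bredon cohomology with constant coefficients is orbit-space cohomology, $\und{H}^n_G(X;\undk)(G/H)\cong H^n(X/H;k)$. This is special to cohomology with $\undk$-coefficients: Bredon homology with $\undk$-coefficients is not orbit-space homology, and the fixed points $S(V)^H$ play no role. Knowing $S(V)/H\cong S^2$ (Riemann--Hurwitz) makes every level of $\und{H}^2$ equal to $k$, but the values do not determine the Mackey functor. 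Already for $G=C_p$ in characteristic $p$, the functors $\undk$, $\overk$, and the one with zero restriction and transfer all take value $k$ at both levels. What must be proved is that the restriction $H^2(S(V)/K;k)\to H^2(S(V)/H;k)$ is multiplication by $[K:H]$, and that the transfer is the identity on top classes. The first amounts to showing the branched cover $S(V)/H\to S(V)/K$ has degree $[K:H]$: each nontrivial element is a rotation with two fixed points, so generic points are free and local degrees are $+1$. This is the paper's claim (2). Your phrase ``transfers act by index multiplication'' is, with $\undk$-coefficients, a description of $\undk$ rather than of the answer. If you meant that pushforward on $H_2$ of the orbit spaces is multiplication by the index, that is exactly the degree statement, and it is the part that needs an argument.
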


Thus, by the classification of finite subgroups of $SO(3)$, $RO(G)$-graded homotopy of the cohomology of a point with constant $\undk$-coefficients is self-injective whenever $G$ is cyclic, dihedral, the tetrahedral group of order $12$, the octahedral group of order $24$, or the icosahedral group of order $60$.

More generally, \cite{HZAnderson} classifies precisely all the finite groups for which $H\undz$ and $H\undt$ are Anderson self-dual.

\subsection{Acknowledgments}
The authors would like to thank Mike Hill, John Greenlees, and Nickolas Ricka for many helpful conversations. 
In particular, John Greenlees suggested the approach of tensor-hom adjunction as an alternative to Baer's criterion and independently Nickolas Ricka suggested Anderson duality.  
Mike Hill helped connect the dots and has been an invaluable resource. 
We are grateful to Hana Jia Kong, Achim Krause, Yutao Liu, Danny Shi, and Zhouli Xu for sharing early drafts of their work on the equivariant Anderson duality of $H\undz$ and $H\undt$.

Authors May and Zou would also like to thank the Isaac Newton Institute for Mathematical Sciences, Cambridge, for support and hospitality during the programme Introductory Workshop for Equivariant Homotopy Theory in Context, where some of the work on this paper was undertaken. This work was supported by EPSRC grant EP/Z000580/1 and RCN Project number 312472 Equations in Motivic Homotopy. This work was also supported by the Heilbronn Institute for Mathematical Research. Zou acknowledges funding from National Natural Science Foundation of China 12501088.

\subsection{Organization}
In \cref{sec:preliminaries}, we recall some facts about Mackey functors and the Mackey algebra, as well as their appearance in equivariant homotopy theory. In \cref{sec:self-inj_rings}, we review self-injective rings. In \cref{sec:Anderson}, we turn to equivariant Anderson duality. And finally, in \cref{sec:self-inj_EM_spectra}, we use Anderson self-duality to deduce self-injectivity of $RO(G)$-graded homotopy rings.

\subsection{Notation}
Let $R$ be a ring and let $R\MMod$ denote the category of left $R$-modules. When $R$ is a graded ring, by abuse we let $R\MMod$ denote the category of graded left $R$-modules. 
We use $k$ to denote a field. 
Throughout, $G$ is a finite group and $e$ denotes the trivial group.

\section{Preliminaries}\label[section]{sec:preliminaries}

\subsection{Mackey functors and the Mackey algebra} In equivariant homotopy theory, stable homotopy groups are replaced by stable homotopy Mackey functors. There are three equivalent definitions of Mackey functors described in \cite[Section 2]{TW95}. 
For concreteness, we include a definition below. 
For further background on Mackey functors, we refer the reader to \cite{TW95}. 

\begin{defn}[Mackey functor]\label[definition]{def:MackeyFunctor}
  Let $R$ be a ring. 
  An $R$-linear \emph{$G$-Mackey functor} $M$ is a function
  \[
  M \colon \{G/H \mid H\leq G \} \to R\MMod
  \]
  with morphisms called \emph{restrictions, transfers, and conjugation maps}
  \begin{align*}
    \res_K^H &\colon M(G/H) \to M(G/K)\\
    \tr_K^H &\colon M(G/K) \to M(G/H)\\
    \ c_g &\colon M(G/H) \to M(G/gHg^{-1})
  \end{align*}
  for all subgroups $K \leq H$ and for all $g \in G$, such that
\begin{enumerate}
  \item $\tr_H^H$, $\res_H^H$, $c_h \colon M(G/H) \to M(G/H)$ are the identity morphisms for all subgroups $H \leq G$ and $h \in H$,
  \item $\res_J^K \res_K^H = \res_J^H$ and $\tr_K^H \tr_J^K = \tr_J^H$ for all subgroups $J \leq K \leq H$,
  \item $c_g c_h = c_{gh}$ for all $g,h \in G$,
  \item $\res_{gKg^{-1}}^{gHg^{-1}} c_g = c_g \res_K^H$ and $\tr_{gKg^{-1}}^{gHg^{-1}} c_g = c_g \tr_K^H$ for all subgroups $K \leq H$ and $g \in G$, and
  \item $\res_J^H \tr_K^H$ satisfies the double coset formula, i.e.\ for all subgroups $J, K \leq H$,
\[
  \res_J^H \tr_K^H = \sum_{x \in [J\backslash H/ K]} \tr_{J\, \cap\, xKx^{-1}}^J c_x \res_{x^{-1}Jx\, \cap\, K}^K.
\]
\end{enumerate}
\end{defn}

A morphism of Mackey functors $M \to N$ is defined levelwise, i.e.\ an $R$-module map $M(G/H) \to N(G/H)$ for each $H \leq G$ commuting with restriction, transfer, and conjugation maps. 
For a fixed $G$, the collection of $R$-linear Mackey functors forms a category denoted $\Mack_R(G)$. 
When $R=\Z$, we simply refer to the category of Mackey functors and often omit the ring $\Mack(G) = \Mack_\Z(G)$. 

The category $\Mack_R(G)$ is symmetric monoidal with respect to the box product, with unit the $R$-linear Burnside Mackey functor $\und{A}_R$.

\begin{rem}\label[remark]{rem:Mackey_algebra} As shown in \cite[Section 3]{TW95}, Mackey functors can also be viewed as quiver representations. 
Moreover, there is a Mackey algebra of $R$-linear $G$-Mackey functors, denoted $\mu_R(G)$. 
Observe that the data of a Mackey functor defines a quiver with relations whose vertices are $G/H$ for each subgroup $H \leq G$ and arrows given by the transfer, restriction, and conjugation maps. 
The Mackey algebra $\mu_R(G)$ is the path algebra of the quiver with relations. 
From this perspective, the category $\Mack_R(G)$ of $R$-linear Mackey functors is equivalent to $\mu_R(G)\MMod$.
\end{rem}

\begin{defn}
    A Mackey functor is called \emph{cohomological} if $\tr_H^K \circ \res_H^K$ is multiplication by the index $[K:H]$ for all $H \leq K \leq G$. 
    The category of $R$-linear cohomological Mackey functors is denoted $\CohMack_R(G)$ and the cohomological Mackey algebra is denoted $\cohmu_R(G)$.
\end{defn}

Evaluation at $G = G/e$ defines a functor from $\Mack_R(G)$ to $RG\MMod$, where $RG$ denotes the group ring. 
See \cite[Section 5]{TW95} and \cite[Section 6]{TWSimple90}.

\begin{prop}
The evaluation functor $\ev_e \colon \Mack_R(G) \to RG\MMod$ sending $M \mapsto M(G/e)$ has both a left adjoint and a right adjoint, and hence is exact. 
\end{prop}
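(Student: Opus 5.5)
Using the description of $\Mack_R(G)$ as $\mu_R(G)\MMod$ from \cref{rem:Mackey_algebra}, I will realize $\ev_e$ as restriction along a ring map and then use the standard induction/coinduction adjoints. Let $e_1 \in \mu_R(G)$ be the idempotent given by the identity path at the vertex $G/e$. For any $\mu_R(G)$-module $M$ we have $M(G/e) = e_1 M \cong \Hom_{\mu_R(G)}(\mu_R(G)e_1, M)$. The corner ring $e_1\mu_R(G)e_1$ is spanned by paths from $G/e$ to itself. Using the double coset formula to rewrite such paths in normal form $\tr\, c\, \res$, the only ones that survive are the conjugations $c_g$ for $g \in G$, with relations $c_gc_h = c_{gh}$. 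Hence $e_1\mu_R(G)e_1 \cong RG$, and $\ev_e$ is the functor $M \mapsto e_1M$ regarded as a module over $e_1\mu_R(G)e_1 \cong RG$.

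With this identification both adjoints are standard. The left adjoint is
\[
N \longmapsto \mu_R(G)e_1 \otimes_{RG} N,
\]
since $\Hom_{\mu}(\mu e_1 \otimes_{RG} N, M) \cong \Hom_{RG}(N, \Hom_\mu(\mu e_1, M)) = \Hom_{RG}(N, e_1M)$ by tensor--hom adjunction. The right adjoint is
\[
N \longmapsto \Hom_{RG}(e_1\mu_R(G), N),
\]
since $\Hom_{RG}(e_1 M, N) \cong \Hom_{RG}(e_1\mu \otimes_\mu M, N) \cong \Hom_\mu(M, \Hom_{RG}(e_1\mu, N))$, again by tensor--hom adjunction together with $e_1\mu\otimes_\mu M \cong e_1M$. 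In Mackey-functor language these are presumably the functors $V \mapsto$ (the fixed-point functor) and $V \mapsto$ (the fixed-quotient functor) of \cite{TW95}. One could also write them down directly and check the adjunctions levelwise, but the algebra route avoids having to verify the Mackey axioms by hand.

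Exactness then follows formally. A functor with a right adjoint preserves colimits, so it is right exact; a functor with a left adjoint preserves limits, so it is left exact. Exactness is also visible directly, since kernels and cokernels in $\Mack_R(G)$ are computed levelwise.

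The main obstacle is the identification $e_1\mu_R(G)e_1 \cong RG$. This requires the normal-form basis of the Mackey algebra, namely that $\mu_R(G)$ is free on classes of triples $\tr_{L}^{H} c_g \res^{K}_{g^{-1}Lg}$ (as in \cite{TW95}). Specialized to $H = K = e$, this forces $L = e$ and leaves exactly the elements $c_g$. If citing this basis is undesirable, I will instead argue directly. The representable functor $\Hom(\mu e_1, -)$ is the Burnside-type Mackey functor $A_{G/e}$. Its endomorphism ring is $A_{G/e}(G/e) = \Hom_G(G\times G/e,\,\cdot)$-type spans from $G/e$ to $G/e$, and these are exactly $G$-maps $G/e\to G/e$, giving $RG$ (or $RG^{\op}$, fixed by composing with inversion).
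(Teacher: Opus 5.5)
Your argument is correct, but it differs from how the paper handles the statement. The paper gives no argument of its own: it defers to Th\'evenaz--Webb and identifies the adjoints as the fixed quotient functor $FQ$ (left) and the fixed point functor $FP$ (right). These are the explicit Mackey functors $FQ_V(G/H)=V_H$ and $FP_V(G/H)=V^H$, and on that route the adjunctions are checked by hand. For example, an $RG$-map $f\colon M(G/e)\to V$ extends uniquely to $M\to FP_V$ via $x\mapsto f(\res^H_e x)$, and compatibility with transfers is exactly the double coset formula $\res^H_e\tr^H_K=\sum_{x\in H/K}c_x\res^K_e$. Your Schur-functor argument with the idempotent $e_1$ gets both adjoints formally from tensor--hom adjunction and never touches the Mackey axioms. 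The cost is that it does not exhibit the adjoints, whereas the paper uses the explicit formulas immediately afterwards ($\und{R}=FP_R$, $\und{R}[G]=FP_{RG}$).

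There are two small corrections. First, read in the order you listed the adjoints, your parenthetical has them swapped. Write $e_H$ for the idempotent at $G/H$; then $e_H\mu_R(G)e_1\cong R[H\backslash G]$ and $e_1\mu_R(G)e_H\cong R[G/H]$. So $\mu_R(G)e_1\otimes_{RG}V$ is $FQ_V$ and $\Hom_{RG}(e_1\mu_R(G),V)$ is $FP_V$, matching the paper. Second, what you call the main obstacle is not needed for existence of the adjoints. The assignment $g\mapsto c_g$ is a unital ring map $RG\to e_1\mu_R(G)e_1$ directly from the axioms $c_gc_h=c_{gh}$ and $c_1=\mathrm{id}$ on $M(G/e)$. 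Both tensor--hom adjunctions go through with $RG$ acting on $\mu_R(G)e_1$ and $e_1\mu_R(G)$ through this map. The isomorphism $e_1\mu_R(G)e_1\cong RG$ (true, by the Th\'evenaz--Webb basis) and your span-category fallback are only needed to identify the adjoints concretely.
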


The right adjoint is called the fixed point Mackey functor $FP$ and and the left adjoint is called the fixed quotient Mackey functor $FQ$. 
We include the definition of the fixed point Mackey functor here.

\begin{defn}
    Given an $RG$-module $V$, the \emph{fixed point Mackey functor} $FP_V$ is defined by taking $H$-fixed points $FP_V(G/H) := V^H$. 
    The restrictions are inclusions and the transfers are the relative trace maps $\tr_K^H := \Sigma_{h \in [H/K]} h \cdot x$. 
    Conjugation $c_g$ is left multiplication by $g$. 
\end{defn}

Two important examples are the trivial module and regular module.
\begin{exmp}[Constant Mackey functor]
Consider $R$ with the trivial action as an $RG$-module. 
The fixed point Mackey functor $FP_R$ takes value $R$ at each $G/H$. 
Restriction is multiplication by $1$ and the transfer $\tr_K^H$ is multiplication by the index $[H:K]$. 
This is also called the \emph{constant Mackey functor} and written $\und{R}$.
\end{exmp}

\begin{exmp}\label[example]{ex:group_ring_Mackey}
    Consider the regular module $RG$ as a left $RG$-module. 
    The fixed point Mackey functor $FP_{RG}$ has value $RG$ at $G/e$ and $R$ at $G/G$. We denote this $\und{R}[G]$.
\end{exmp}

A (commutative) monoid in $\Mack_R(G)$ is called a (commutative) \emph{Green functor}. Given a Green functor $M$, one can consider the category of $M$-modules. 
As described in \cite{TW95}, when $R$ is a commutative ring, there are equivalences of categories
\[
\Mack_R(G) \simeq \und{A}_R\MMod \simeq \mu_R(G)\MMod,
\]
and 
\[
\CohMack_R(G) \simeq \und{R}\MMod \simeq \cohmu_R(G)\MMod.
\]

There is a notion of the dual of a Mackey functor. See also \cite[Section 4]{TW95} for more details.

\begin{defn}\label[definition]{def:dual_Mackey_functor}
    Let $M \in \Mack_R(G)$ be a Mackey functor. The $R$-linear \emph{dual} of $M$, denoted $\Rdual{M}$ or $\Hom_R(M,R)$, is a Mackey functor defined by applying $\Hom_R(-,R)$. Explicitly, define $\Rdual{M}$ by 
    \[
    \Rdual{M}(G/H) = \Rdual{M(G/H)} = \Hom_R(M(G/H),R),
    \]
    with 
    \[
    \tr_H^K = \Rdual{(\res_H^K)}, \qquad \res_H^K = \Rdual{(\tr_H^K)}, \qquad \text{ and } \qquad c_g = \Rdual{(c_{g^{-1}})}.
    \]
    Equivalently, if $M$ is a left $\mu_R(G)$-module, $\Rdual{M}$ is defined by applying $\Hom_{R}(-,R)$ to the right $\mu_R(G)$-module $M^{\op}$.
\end{defn}

\begin{rem}\label[remark]{rem:hom_Mackey_functor}
    There is an obvious generalization of the dual construction for any left $R$-module $N$, replacing $\Hom_R(-,R)$ with $\Hom_R(-,N)$.
\end{rem}

\begin{exmp}
    Let $\und{R} \in \Mack_R(G)$ be the constant Mackey functor. The $R$-linear dual $\overR:= \Rdual{\und{R}}$ has transfer multiplication by $1$ and restriction multiplication by the index.
\end{exmp}

\begin{rem}
    The dual $\Rdual{M}$ (along with its generalizations to other modules) is described as the levelwise $\Hom$ and denoted $\Hom_L$ in \cite{Zeng,YanC2nHF2,SikoraYanCp}. 
    For constant Mackey functors $\und{R}$, the $R$-linear dual is often denoted $\und{R}^{\op}$ in the literature.
\end{rem}

\begin{defn}
    Let $M \in \Mack_R(G)$ be a Mackey functor and let $R \to k$ be a map of rings. The $k$-linear dual of $M$, denoted $\kdual{M}$, is obtained by restriction of scalars and applying $\Hom_k(-,k)$.
\end{defn}

\begin{exmp}\label[example]{ex:constant_dual}
    The constant Mackey functor $\undz \in \Mack(G)$ has $\Z$-linear dual $\overz = \Rdual{\undz}$ with transfers multiplication by $1$.

    Let $k$ be a finite field and consider the ring map $\Z \to k$ sending $1 \mapsto 1$. The constant Mackey functor $\undk \in \Mack_{\Z}(G)$ has $\Z$-linear dual $\Rdual{\undk} \cong 0$, but $k$-linear dual $\overk = \kdual{\undk} \cong \Rdual{\undz} \otimes_{\Z} k$ with transfers multiplication by $1$.
\end{exmp}

\subsection{Equivariant homotopy theory} 

In equivariant homotopy theory, invariants like homotopy and cohomology are sometimes $\Z$-graded or graded by $RO(G)$, the real orthogonal representations of $G$ (or by a subgroup of $RO(G)$). 
In order to make this precise, we let $RO(G)$ denote the free abelian group on an ordered basis of irreducible real orthogonal $G$-representations. We recall some basic facts about $G$-spectra. See for example \cite{HillHHT} for more details.

Given a $G$-spectrum $X$, one can define $\Z$-graded homotopy Mackey functors, written $\undpi_n^G(X)$ for $n \in \Z$. 
Taken together, these form a graded ring $\undpi_*^G(X)$. 
As in \cref{rem:Mackey_algebra}, a Mackey functor is a module over the Mackey algebra. 
So we can think of a homotopy Mackey functor $\undpi_n^G(X)$ as having levels, $\undpi_n^G(X)(G/H)$ for each $H \leq G$, or as a functor valued in $\mu_{\Z}(G)$-modules. 

In the homotopy category of $G$-spectra, weak equivalences are determined by $\Z$-graded homotopy Mackey functors $\undpi^G_n (-)$. 
That is, a map $f \colon X \to Y$ is a weak equivalence in $G$-spectra precisely when $f_* \colon \undpi_n^G(X) \to \undpi_n^G(Y)$ is an isomorphism for all $n \in \Z$. 

For the $G$-spectrum $X$, one can also compute $RO(G)$-graded homotopy Mackey functors, written $\undpi_\alpha^G(X)$ for $\alpha \in RO(G)$. 
Taken together, these form a (multigraded) ring $\undpi_\bigstar^G(X)$.  
It is often easier to compute $RO(G)$-graded abelian group valued homotopy, $\pi_\bigstar^G(X) = \undpi_\bigstar^G(X)(G/G)$.

Our aim is to study $RO(G)$-graded cohomology theories, which are represented by genuine $G$-spectra. 
In particular, ordinary $RO(G)$-graded Bredon cohomology with coefficients in a Mackey functor $M$, denoted $H^\bigstar_G(-;M)$, is represented by a genuine Eilenberg--MacLane spectrum $HM$, defined by the property
\[
\undpi_{n}(HM) = \begin{cases}
    M & n = 0\\
    0 & \text{otherwise.}
\end{cases}
\]

In general, the $RO(G)$-graded homotopy groups and homotopy Mackey functors of an Eilenberg--MacLane spectrum are nontrivial to compute, and in many cases unknown. 
When $M$ is a Mackey ring, the $RO(G)$-graded homotopy groups (resp.\ Mackey functors) form a graded ring (resp.\ graded Mackey ring). 
If $M$ is a commutative Mackey ring, $RO(G)$-graded homotopy is graded commutative. 
We will mainly be concerned with the injectivity of the graded $\mu_k(G)$-algebra  $\undpi^G_{\bigstar}(H\undk)$ as well as of the graded $k$-algebras $\undpi^G_{\bigstar}(H\undk)(G/H)$ for $k$ a finite field and $\undk$ the constant Mackey functor. 
In this case, $\undk$ is in fact a Tambara functor (and hence also a commutative Green functor), and $H\undk$ is a genuine $G$-commutative ring spectrum.

\begin{exmp}
    Let $G = C_2$ and $k = \F_2$. The $RO(G)$-graded homotopy groups of the $C_2$-spectrum $H\undt$ form a commutative bigraded ring $\M_2^{C_2}$.
\end{exmp}







One of the challenges of $RO(G)$-graded homotopy and cohomology computations is that for larger groups the algebra becomes significantly more unwieldy.

\section{Self-injective rings}\label[section]{sec:self-inj_rings}

We define self-injective rings following Utumi \cite{Utumi}.

\begin{defn}
    A (graded) ring $R$ is \emph{left self-injective} if $R$ itself, i.e.\ the regular module $\prescript{}{R}{R}$, is injective as a (graded) left $R$-module.
    Similarly, a (graded) ring $R$ is \emph{right self-injective} if $R_R$ is injective as a (graded) right $R$-module. 
    We say $R$ is \emph{self-injective} if $R$ is both left and right self-injective.
\end{defn}

Note that a commutative ring $R$ that is left self-injective is also right self-injective, and vice-versa. 
The same is true of a graded commutative ring graded on a free abelian group.

A well-known example is the group algebra of a finite group.

\begin{exmp}
    For $k$ a field and $G$ a finite group, the group algebra $kG$ is self-injective.
\end{exmp}

\begin{exmp}
    For $k$ a field, the graded field $k[x,x^{-1}]$ is self-injective.
\end{exmp}

\begin{exmp} 
    Let $k$ be a field of characteristic $p$. It was shown in \cite[Theorem 19.2]{TW95} that the Mackey algebra $\mu_k(G)$ is self-injective if and only if a Sylow $p$-subgroup of $G$ is cyclic of order $1$ or $p$. 
    In contrast to $\mu_k(G)$, the cohomological Mackey algebra $\cohmu_k(G)$ is self-injective if and only if the characteristic $p$ does not divide the order of $G$. 
\end{exmp}

Nevertheless, we will see that there are $RO(G)$-graded $\cohmu_k(G)$-algebras that are self-injective, even in cases where $\cohmu_k(G)$ is not.

There are several approaches to show a graded ring is self-injective. 
For example, there is a graded version of Baer's criterion, see \cite[Prop 9.3.6]{Bland}. 
This was the approach taken in \cite[Appendix]{CMay} to show the commutative $RO(C_2)$-graded ring $\M_2^{C_2} = \pi^{C_2}_{\bigstar} H\undt$ was self-injective. It was also the approach used in \cite{RaiesThesis} to show the Mackey valued version $\und{\M}_2^{C_2} = \undpi^{C_2}_{\bigstar} H\undt$ was self-injective.

Our approach will be to show a graded ring $R$ is self-injective by showing that $R$ is self-dual and hence the functor $\intHom_R(-,R)$ is exact. In general, our rings will only be graded commutative.
\bigskip

Let $R$ be a graded $k$-algebra and consider the category of graded left $R$-modules. 
By abuse of notation, let $M \otimes_R N$ denote the graded tensor product of a graded left $R$-module $N$ and a graded right $R$-module $M$. 
There is an internal $\Hom$ functor, which we denote $\intHom_R$, taking graded left $R$-modules to graded right $R$-modules (and vice versa). 

\begin{lem}[Self-duality]\label[lemma]{lem:tensor-hom}
  Let $k$ be a field and let $R$ be a graded $k$-algebra such that $\Hom^*_k(R_{R}, k)\cong \prescript{}{R}R$ as graded left
  $R$-modules. 
  Then $R$ is left self-injective as a graded $k$-algebra, i.e., the regular module $\prescript{}{R}{R}$ is injective as a graded left $R$-module.
\end{lem}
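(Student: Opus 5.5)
The plan is to deduce injectivity of ${}_RR$ from a tensor--hom adjunction, using that $k$ is a field. Since ${}_RR \cong \Hom^*_k(R_R,k)$ as graded left $R$-modules, it suffices to show that the graded left $R$-module $\Hom^*_k(R_R,k)$ is injective. Equivalently, we show that the functor $\Hom_{R}(-,\Hom^*_k(R_R,k))$ (degree-preserving homs of graded left $R$-modules) is exact on graded left $R$-modules.

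First I would set up the graded adjunction. For a graded left $R$-module $N$ there is a natural isomorphism
\[
\Hom_R\bigl(N, \Hom^*_k(R_R,k)\bigr) \cong \Hom_k\bigl(R\otimes_R N, k\bigr) \cong \Hom_k(N,k),
\]
where the right-hand side consists of degree-zero graded $k$-linear maps. Here the left $R$-structure on $\Hom^*_k(R_R,k)$ is $(r\cdot f)(s) = f(sr)$, possibly with a Koszul sign in the graded-commutative setting. The map sends $\phi$ to $n\mapsto \phi(n)(1)$. The inverse sends $g$ to $n \mapsto (s\mapsto g(sn))$, suitably homogeneous, with the sign convention chosen consistently. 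Checking that these are mutually inverse and $R$-linear is routine bookkeeping.

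Second, I would observe that $\Hom_k(-,k)$ on graded $k$-vector spaces is exact. In each degree it is the linear dual of a vector space over a field, and a short exact sequence of graded modules is exact degreewise. Composing with the forgetful functor from graded $R$-modules to graded $k$-modules, which is exact, the functor $N \mapsto \Hom_k(N,k)$ is exact. By naturality of the adjunction isomorphism, $\Hom_R(-,\Hom^*_k(R_R,k))$ is then exact, so $\Hom^*_k(R_R,k)$ is injective in graded left $R$-modules. Transporting along the given isomorphism shows that ${}_RR$ is injective, i.e.\ $R$ is left self-injective.

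The main obstacle is making the grading conventions precise, since the grading group may be $RO(G)$ rather than $\Z$. We need $\Hom^*_k(R_R,k)$ in degree $\alpha$ to be $\Hom_k(R_{-\alpha},k)$, so that degree-zero maps $N\to \Hom^*_k(R_R,k)$ correspond exactly to the degreewise duals $N_\alpha \to k$. We also need the signs to make the action a genuine left module structure. Once the convention is fixed so that the adjunction holds degreewise, exactness is immediate. No finiteness hypothesis is required for this direction.
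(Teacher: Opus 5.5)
Your proposal is correct and is essentially the paper's proof: both use the tensor--hom adjunction $\Hom_R(N,\Hom^*_k(R_R,k))\cong\Hom_k(R\otimes_R N,k)\cong\Hom_k(N,k)$ together with exactness of $k$-linear duality, then transport injectivity along the given isomorphism. The only cosmetic difference is that you test injectivity with degree-zero homs, whereas the paper works with the internal $\intHom_R(-,R)$. Via the adjunction your degree-zero homs become $\Hom_k(N_0,k)$, not the full family of degreewise duals your last paragraph suggests, but this functor is also exact, so either version suffices.
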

\begin{proof}
We show that $\intHom_R(-,\prescript{}{R}{R})$ is exact. 
For any left $R$-module $N$, adjunction gives natural isomorphisms
\begin{align*}
\intHom_R(N, R) \cong \intHom_R(N, \intHom_k(R,k)) \cong \intHom_k(R \otimes_R N, k) \cong \intHom_k(N, k).
\end{align*}
Thus the internal $\Hom$ functor $\intHom_R(-,\prescript{}{R}R)$ from graded left $R$-modules to graded right $R$-modules is naturally isomorphic to the functor $\intHom_k(-,k)$. 
The latter is exact and thus $\intHom_R(-,\prescript{}{R}R)$ is exact.
\end{proof}

\begin{rem}
The proof in \cref{lem:tensor-hom} generalizes to the case $\Hom_k(R,k)\cong \Sigma^n R$ for some shift of $R$. Observe that $\Sigma^n R$ is an injective left $R$-module if and only if $R$ is an injective left $R$-module.
\end{rem}

\section{Equivariant Anderson duality}\label[section]{sec:Anderson}
Equivariant Anderson duality is explained in \cite{Ricka} for $G=C_2$ and in \cite{Zeng} more generally. For $G=e$ we recover classical Anderson duality. We review the relevant constructions below and also compute the Anderson duals of several examples of equivariant Eilenberg--MacLane spectra. \smallskip

Let $G$ be a finite group. 
Observe that $\Hom_{Ab}(\pi^G_\bigstar(-), \Q)$ defines an $RO(G)$-graded cohomology theory on $G$-spectra. 
Let $I_\Q^G$ be the $G$-spectrum representing this cohomology theory. 
Similarly, define $I_{\Q/\Z}^G$ to be the spectrum representing $\Hom_{Ab}(\pi^G_\bigstar(-), \Q/\Z)$.

Given a $G$-spectrum $X$, define the equivariant function spectrum $I_\Q^G X := F(X,I_\Q^G)$ with the property that for each $\alpha \in RO(G)$
\[
\pi_{\alpha}^G(I_\Q^G X) = \pi_{\alpha}^G F(X,I_\Q^G) = (I_\Q^G)^{-\alpha}(X) = \Hom_{Ab}(\pi_{-\alpha}^G X, \Q).
\]
Since $F(X, I_\Q^G)$ is a $G$-spectrum, we can also take Mackey functor valued homotopy groups. By varying subgroups we see that 
\[
\und{\pi}_{\alpha}(I_\Q^G X) = \und{\pi}_{\alpha} F(X,I_\Q^G) = (\und{I}_\Q^G)^{-\alpha}(X) = \Hom_{Ab}(\und{\pi}_{\, -\alpha} X, \Q),
\]
where the last term is the $G$-Mackey functor obtained from $\und{\pi}_{\, -\alpha}X$ by applying $\Hom_{Ab}(-, \Q)$ as in \cref{def:dual_Mackey_functor} and \cref{rem:hom_Mackey_functor}. 

We can similarly define $I_{\Q/\Z}^GX := F(X,I_{\Q/\Z}^G)$. 
There is a map $I_\Q^G \to I_{\Q/\Z}^G$. Taking the fiber, we have
\[
I_{\Z}^G := \fib(I_\Q^G \to I_{\Q/\Z}^G).
\]
The \emph{equivariant Anderson dual} of a $G$-spectrum $X$ is defined to be the spectrum $I_\Z^G X := F(X, I_\Z^G)$.

The fiber sequence
\[
I_\Z^G X \to I_\Q^G X \to I_{\Q/\Z}^G X
\]
induces a long exact sequence in $RO(G)$-graded homotopy.
\[
\cdots \to \und{\pi}_{\alpha}(I_\Z^G X) \to \und{\pi}_{\alpha}(I_\Q^G X) \to \und{\pi}_{\alpha}(I_{\Q/\Z}^G X) \to \und{\pi}_{\alpha-1}(I_\Z^G X) \to \cdots
\]
An exact sequence of Mackey functors is exact at every $G/H$, so evaluating at $G/G$ gives the following long exact sequence of abelian groups.
\[
\cdots \to \pi_{\alpha}^G(I_\Z^G X) \to \pi_{\alpha}^G(I_\Q^G X) \to \pi_{\alpha}^G(I_{\Q/\Z}^G X) \to \pi_{\alpha-1}^G(I_\Z^G X) \to \cdots
\]

Unraveling the definitions in the long exact sequence and using that $\Q$ is injective, we get a short exact sequence of Mackey functors
\begin{align}\label{eq:sesMackey}
0 \to \Ext^1_{\Z}(\undpi_{\, -(\alpha+1)}^G (X), \Z) \to \undpi_{\, \alpha}^G (I_\Z (X)) \to \Hom_{\Z}(\undpi_{\, -\alpha}^G (X), \Z) \to 0.
\end{align}
Again evaluating at $G/G$, we have a short exact sequence of abelian groups
\begin{align}\label{eq:sesAbelianGroups}
0 \to \Ext^1_{\Z}(\pi_{-(\alpha+1)}^G (X), \Z) \to \pi_\alpha^G (I_\Z^G (X)) \to \Hom_{\Z}(\pi_{-\alpha}^G (X), \Z) \to 0.
\end{align}

\begin{defn}
A $G$-spectrum $X$ is called \emph{Anderson self-dual} if $I_\Z^G(X) \simeq \Sigma^\beta X$ for some suspension $\Sigma^\beta$. 
\end{defn}

\subsection{Anderson duality in the $k$-linear case}

\begin{prop}\label[proposition]{prop:Hk_Anderson_dual}
Let $G$ be a finite group and let $k$ be a finite field. 
Let $H\undk$ be the $G$-equivariant Eilenberg--MacLane spectrum for the constant Mackey functor $\undk$. 
Then the equivariant Anderson dual of $H\undk$ is
\[
I_{\Z}^G(H\undk) \simeq \Sigma^{-1}H\overk
\]
as an $H\undk$-module.
\end{prop}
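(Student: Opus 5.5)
Since $I_\Z^G(H\undk) = F(H\undk, I_\Z^G)$, its integer-graded homotopy Mackey functors are given by the short exact sequence \eqref{eq:sesMackey} with $X = H\undk$ and $\alpha = n \in \Z$. The plan is to compute these Mackey functors, observe that they are concentrated in a single degree, and conclude that the spectrum is an Eilenberg--MacLane spectrum. The $H\undk$-module structure will be handled separately at the end.

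First I compute the homotopy Mackey functors. We have $\undpi_m^G(H\undk) = \undk$ for $m=0$ and $0$ otherwise. Each level $k$ is a finite abelian group, so $\Hom_\Z(\undk,\Z)=0$. Hence the right-hand term of \eqref{eq:sesMackey} vanishes for every $n$. The left-hand term $\Ext^1_\Z(\undpi_{-(n+1)}(H\undk),\Z)$ is nonzero only when $n=-1$. In that case it is the Mackey functor obtained by applying $\Ext^1_\Z(-,\Z)$ levelwise to $\undk$, with restrictions and transfers swapped as in \cref{def:dual_Mackey_functor}. Thus $\undpi_n(I_\Z^G H\undk)$ is concentrated in degree $-1$, and $I_\Z^G H\undk \simeq \Sigma^{-1} HN$ where $N = \Ext^1_\Z(\undk,\Z)$ with this dual structure.

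Next I identify $N$ with $\overk$. For a finite abelian group $A$ there is a natural isomorphism $\Ext^1_\Z(A,\Z)\cong \Hom_\Z(A,\Q/\Z)$, coming from $0\to\Z\to\Q\to\Q/\Z\to 0$. For $A$ a $k$-vector space, $k$ a field of characteristic $p$, there is a further natural isomorphism $\Hom_\Z(A,\Q/\Z) = \Hom_\Z(A,\tfrac1p\Z/\Z)\cong \Hom_{\F_p}(A,\F_p)\cong \Hom_k(A,k)$. The last step composes with a nonzero trace $\operatorname{Tr}\colon k\to\F_p$; since $k/\F_p$ is separable the trace form is nondegenerate, which gives the isomorphism. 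Naturality in $A$ shows that these identifications commute with the dualized restriction, transfer and conjugation maps. Therefore $N\cong \kdual{\undk} = \overk$ as Mackey functors, by \cref{ex:constant_dual}, and transfers in $N$ are multiplication by $1$ and restrictions are multiplication by the index.

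Finally I address the $H\undk$-module structure, which I expect to be the main obstacle. The spectrum $F(H\undk,I_\Z^G)$ is naturally an $H\undk$-module through the source. An $H\undk$-module whose homotopy is concentrated in a single degree is determined by its homotopy $\undk$-module, via the Postnikov truncation in $H\undk$-modules: its heart is $\undk$-modules in Mackey functors. So it remains to check that the $\undk$-module structure on $N$ induced from the $H\undk$-action agrees with the standard one on $\overk$. This action is precomposition with multiplication on $\undk$, which is exactly the dual action used to define $\kdual{\undk}$, so the two agree. I would state this carefully using the $t$-structure on $H\undk$-modules in genuine $G$-spectra.
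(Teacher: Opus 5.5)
Your proposal is correct and follows the same route as the paper. You apply \eqref{eq:sesMackey} in integer degrees, note that the $\Hom$ term vanishes because $k$ is finite, conclude that $I_\Z^G(H\undk)$ is Eilenberg--MacLane with $\undpi_{-1}\cong\Ext^1_\Z(\undk,\Z)\cong\Hom_k(\undk,k)=\overk$, and additionally justify two points the paper leaves implicit: the identification via $\Q/\Z$ and the trace form, and the $H\undk$-module structure via the $t$-structure on $H\undk$-modules.
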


\begin{proof}  
We use the short exact sequence from \cref{eq:sesMackey} to determine $I_{\Z}^G (H\undk)$. 
The short exact sequence becomes
\[
0 \to \Ext^1_{\Z}(\undpi_{\, -(\alpha+1)}^G (H\undk), \Z) \to \undpi_{\, \alpha}^G (I_\Z (H\undk)) \to \Hom_{\Z}(\undpi_{\, -\alpha}^G (H\undk), \Z) \to 0.
\]
Recall that the homotopy type of a $G$-spectrum is determined by its Mackey functor valued homotopy in integer degrees. 
Thus, taking $\alpha = n$ to be a trivial (possibly virtual) representation of dimension $n$, we can determine the homotopy type of $I_\Z(H\undk)$. 
Since the homotopy of $H\undk$ is torsion, $\undpi_{\bigstar}^G I_\Z (H\undk)$ is concentrated in a single degree. 
Observing that
\[
\Ext^1_{\Z}(\undk,\Z) \cong \Hom_k(\undk,k) \cong \overk,
\]
this nonzero term is 
\[
\undpi_{\, -1}^G (I_\Z (H\undk))  \cong \overk. \qedhere
\]
\end{proof}

The proof generalizes to any $k$-linear commutative Green functor.

\begin{prop}\label[proposition]{prop:k-linear_Tambara_Anderson_dual}
Let $G$ be a finite group and let $k$ be a finite field. Let $M$ be a $k$-linear commutative Green functor. Then the equivariant Anderson dual of $HM$ is
\[
I_\Z^G(HM) \simeq \Sigma^{-1} H\kdual{M}
\]
as an $HM$-module.
\end{prop}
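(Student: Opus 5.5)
We follow the argument of \cref{prop:Hk_Anderson_dual}, now with $\undk$ replaced by $M$, and use the short exact sequence \cref{eq:sesMackey} with $X = HM$. The homotopy type of a $G$-spectrum is detected by its $\Z$-graded homotopy Mackey functors. So it suffices to take $\alpha = n \in \Z$ and compute $\undpi_n^G(I_\Z^G HM)$. We have $\undpi_{-n}^G(HM)=M$ if $n=0$ and $0$ otherwise.

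\textbf{Step 1: computing the homotopy.} Since $M$ is $k$-linear and $k$ is a finite field, each $M(G/H)$ is a $k$-vector space and hence a torsion abelian group of exponent $p=\operatorname{char}k$. Therefore $\Hom_\Z(M(G/H),\Z)=0$ at every level, and the right-hand term of \cref{eq:sesMackey} vanishes for all $n$. The left-hand term is $\Ext^1_\Z(\undpi_{-(n+1)}^G HM,\Z)$, which is nonzero only when $n=-1$. There it equals the levelwise dual $\Ext^1_\Z(M,\Z)$, formed as in \cref{def:dual_Mackey_functor} and \cref{rem:hom_Mackey_functor}. Hence $I_\Z^G HM$ is an Eilenberg--MacLane spectrum concentrated in degree $-1$.

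\textbf{Step 2: identifying the Mackey functor.} We identify $\Ext^1_\Z(-,\Z)$ with $\kdual{(-)}$ naturally on $k$-vector spaces. For a $p$-torsion group $V$ we have $\Ext^1_\Z(V,\Z)\cong\Hom_\Z(V,\Q/\Z)=\Hom_\Z(V,\tfrac1p\Z/\Z)\cong\Hom_{\F_p}(V,\F_p)$. Composing with the trace $k\to\F_p$, which is a nondegenerate pairing for the finite separable extension $k/\F_p$, gives $\Hom_{\F_p}(V,\F_p)\cong\Hom_k(V,k)$. These isomorphisms are natural in $V$, so they commute with the dualized restrictions, transfers and conjugations. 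We must also check that $\Ext^1_\Z(-,\Z)$ is contravariant in the same way as $\Hom$, so that the Mackey structure matches \cref{def:dual_Mackey_functor}. This yields $\undpi_{-1}^G(I_\Z^G HM)\cong\kdual{M}$, and so $I_\Z^G HM\simeq\Sigma^{-1}H\kdual{M}$ as $G$-spectra.

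\textbf{Step 3: the module structure.} This is the main obstacle. $I_\Z^G HM=F(HM,I_\Z^G)$ is an $HM$-module via precomposition with the multiplication on $HM$, and the equivalence must respect this action. The approach is to note that an $HM$-module whose homotopy is concentrated in a single degree is determined by that homotopy group as a module over the Green functor $\undpi_0 HM = M$. This follows from a Postnikov/heart argument: the heart of $HM$-modules with the $t$-structure induced from $G$-spectra is $M$-modules. It then remains to check that the $M$-action on $\undpi_{-1}(I_\Z^G HM)$ agrees with the dual action on $\kdual{M}$, namely $(m\cdot f)(x)=f(mx)$. This holds because the action on $F(HM,I_\Z^G)$ is induced by precomposition, and the connecting map underlying $\Ext^1_\Z$ in \cref{eq:sesMackey} is natural with respect to self-maps of $HM$. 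Since $M$ is commutative, left and right actions coincide, so no side issues arise.
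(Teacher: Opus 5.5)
Your proposal is correct and takes the paper's route: the paper proves this by rerunning the argument of \cref{prop:Hk_Anderson_dual} with $\undk$ replaced by $M$. That argument reads off the integer-graded homotopy Mackey functors of $I_\Z^G(HM)$ from \cref{eq:sesMackey}, where only the $\Ext^1$ term survives, in degree $-1$. Your trace identification $\Hom_{\F_p}(V,\F_p)\cong\Hom_k(V,k)$ and your heart-of-the-$t$-structure argument for the $HM$-module structure supply details the paper leaves implicit: it simply asserts $\Ext^1_\Z(-,\Z)\cong\Hom_k(-,k)$ and never separately justifies the ``as an $HM$-module'' clause.
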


As an immediate corollary of \cref{prop:k-linear_Tambara_Anderson_dual}, a self-dual commutative Green functor will give rise to a self-dual Eilenberg--MacLane spectrum.

\begin{cor}\label[cor]{cor:self-dual_Tambara}
Let $G$ be a finite group and let $k$ be a finite field. Let $M$ be a $k$-linear commutative Green functor. 
If $M \cong \kdual{M}$, then $HM$ is Anderson self-dual with
\[
I_\Z^G(HM) \simeq \Sigma^{-1} H\kdual{M} \simeq \Sigma^{-1} HM.
\]
\end{cor}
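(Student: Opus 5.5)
The corollary follows by substituting the hypothesis $M \cong \kdual{M}$ into \cref{prop:k-linear_Tambara_Anderson_dual}. First I apply that proposition, which gives an equivalence of $HM$-modules $I_\Z^G(HM) \simeq \Sigma^{-1} H\kdual{M}$. Here $\kdual{M}$ is an $M$-module Mackey functor: the action $M \Box \kdual{M} \to \kdual{M}$ is dual to the multiplication on $M$, using commutativity of $M$ to turn a right action into a left one.

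Next I use the Eilenberg--MacLane functor. It sends an isomorphism of Mackey functors $M \cong \kdual{M}$ to an equivalence $HM \simeq H\kdual{M}$. This holds because an Eilenberg--MacLane spectrum is determined up to equivalence by its single nonvanishing homotopy Mackey functor, and weak equivalences of $G$-spectra are detected on $\undpi_n^G$. Applying $\Sigma^{-1}$ gives $\Sigma^{-1}H\kdual{M} \simeq \Sigma^{-1}HM$. Composing the two equivalences yields $I_\Z^G(HM) \simeq \Sigma^{-1}HM$, which is Anderson self-duality with $\beta = -1$.

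The only point needing care is the parenthetical ``as an $HM$-module.'' If one wants the equivalence $H\kdual{M} \simeq HM$ to respect module structures, the isomorphism $M \cong \kdual{M}$ must be an isomorphism of $M$-modules. I would read the hypothesis that way. Then I would use that the heart of $HM$-modules is the category of $M$-modules in Mackey functors, so $H$ is fully faithful on that heart and sends module isomorphisms to $HM$-module equivalences. For the bare statement of Anderson self-duality, which is only an equivalence of $G$-spectra, no such compatibility is needed and the argument above is complete.
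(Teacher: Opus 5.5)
Your proposal is correct and follows the paper, which treats this as an immediate consequence of \cref{prop:k-linear_Tambara_Anderson_dual}, with the given isomorphism $M \cong \kdual{M}$ inducing $\Sigma^{-1}H\kdual{M} \simeq \Sigma^{-1}HM$. Your remark that the $HM$-module version requires $M \cong \kdual{M}$ to be an isomorphism of $M$-modules is not needed for the statement as written, but it is a sensible refinement and is the form implicitly used later in \cref{thm:Anderson_self_dual_self-injective}.
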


\begin{exmp}\label[example]{ex:kG-fixed-point_self-dual}
Let $G$ be a finite group and $k$ a field of characteristic $p$. The fixed point Tambara functor $\undk[G]$ is self-dual. Thus for $k$ a finite field, $H\undk[G]$ is Anderson self-dual. 
\end{exmp}

\begin{exmp}\label[example]{ex:C_p_Burnside_self-dual}
Let $G=C_p$ and let $k$ be field. The $k$-linear Burnside Tambara functor $\und{A}_k$ is self-dual. Thus, for $k$ a finite field, $H\und{A}_k$ is Anderson self-dual with 
\[
I_\Z^G(H\und{A}_k) \simeq \Sigma^{-1} H\und{A}_k.
\]
\end{exmp}

Although $\undk$ is not self-dual as a Tambara functor nor as a commutative Green functor, for $G=C_{p^n}$ with $p$ prime, $H\undk$ is Anderson self-dual. This was observed integrally in \cite{Zeng} (and subsequently \cite{SikoraYanCp}). 
Their proof generalizes to $G=C_n$. We give a proof for $\undk$ below. See also \cite[Remark 3.2]{GreenleesMeier}. Note that for $G=C_2$ it was observed already in \cite{Ricka}, and for $G=C_{2^n}$ in \cite{YanC2nHF2}, that $H\und{\F}_2$ is Anderson self-dual.

\begin{prop}
\label[proposition]{prop:Cn_Hk_self_dual}
Let $G = C_{n}=\langle\gamma\rangle$ and let $\lambda$ the be two dimensional real representation with $\gamma$ acting as rotation by $\frac{2\pi }{n}$. Then $H\undk$ is Anderson self-dual with
\[
I_\Z^G(H\undk) \simeq \Sigma^{-1}H\overk \simeq \Sigma^{1-\lambda} H\undk
\]
as an $H\undk$-module.
\end{prop}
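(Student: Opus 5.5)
The first equivalence $I_\Z^G(H\undk)\simeq \Sigma^{-1}H\overk$ is exactly \cref{prop:Hk_Anderson_dual}, so the content is the second equivalence $\Sigma^{-1}H\overk \simeq \Sigma^{1-\lambda}H\undk$. Equivalently, $\Sigma^{2-\lambda}H\undk$ is an Eilenberg--MacLane spectrum with $\undpi_0 \cong \overk$ and all other integer-graded homotopy Mackey functors zero, and the identification is as $H\undk$-modules. The plan is to compute $\undpi_n(\Sigma^{2-\lambda}H\undk)=\undpi_{n-2+\lambda}(H\undk)$ using a cellular model of $S^\lambda$.

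\textbf{Cell structure.} $S^\lambda$ has a $G$-CW structure with one fixed $0$-cell (the point at infinity), a $0$-cell $G/e_+$ (the origin plus the orbit of a point, or more simply the unit-circle structure), a free $1$-cell and a free $2$-cell. Concretely, I will use the cofiber sequence $S(\lambda)_+\to S^0\to S^\lambda$ together with the fact that $S(\lambda)$ is the circle with free rotation action, built from one $0$-cell $G/e$ and one $1$-cell $G/e\times e^1$, with attaching map $1-\gamma$. Smashing with $H\undk$ and taking $\undpi_*$ of the cellular chains, I obtain a chain complex of Mackey functors computing $\undpi_*(S^\lambda\wedge H\undk)$:
\[
\undk \leftarrow \undk[G] \xleftarrow{1-\gamma} \undk[G],
\]
in degrees $0,1,2$. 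Here $\undk[G]=\undpi_0(G/e_+\wedge H\undk)$ is the induced Mackey functor $FP_{kG}$ of \cref{ex:group_ring_Mackey} (for the constant $\undk$ this is the free-orbit Mackey functor with value $kG$ at $G/e$ and, at $G/H$, the $H$-fixed points). The first map is the augmentation/transfer.

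\textbf{Homology.} At the underlying level $G/e$ this is the cellular chain complex of $S^2$, so the homology is $k$ in degree $2$ only. At level $G/H$, with $H=C_m\le C_n$, the complex becomes $k\leftarrow k[G/H]\xleftarrow{1-\gamma}k[G/H]$, with first map $\sum g\mapsto$ index, computed via fixed points. The kernel of $1-\gamma$ is the invariants, spanned by the norm element, giving $k$ in degree $2$. Exactness in degree $1$ holds because $\ker(\text{aug})=\operatorname{im}(1-\gamma)$ for a cyclic group. In degree $0$, surjectivity needs checking. The map $\undk[G]\to\undk$ induced by $G/e_+\to S^0$ is the transfer from $G/e$, which at level $G/H$ in $\undk$ is multiplication by $|H|$ composed with a restriction. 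It is therefore not surjective, which creates a cokernel. So rather than $S^\lambda$, I will compute $S^{-\lambda}$ by dualizing: apply $\Hom$ to the cellular complex, obtaining the cochain complex
\[
\undk \to \undk[G]\xrightarrow{1-\gamma^{-1}}\undk[G],
\]
and read off $\undpi_{*-\lambda}$. Then I shift by $2$.

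The main obstacle is tracking the Mackey structure on the surviving homology group. I must verify that the degree-$2$ class generates a copy of $\undk$ with transfers by $1$ and restrictions by the index, i.e.\ $\overk$. The norm element $\sum_{g\in G/H} g$ restricts to a sum of $[H:K]$-many basis elements at level $K$, while the transfer is the relative trace. I would identify the resulting Mackey functor with $\overk$ by checking these two structure maps directly. I also need to confirm the lower-degree homology vanishes levelwise, in particular handling the degree-$0$ term through the correct choice of $S^{\lambda}$ versus $S^{-\lambda}$. If the sign bookkeeping fails, I would switch to $\Sigma^{\lambda-2}$ and use self-duality $\kdual{\overk}=\undk$ to convert.

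Finally, a spectrum with a single nonzero homotopy Mackey functor is the corresponding Eilenberg--MacLane spectrum. The module structure is forced because $\Sigma^{1-\lambda}H\undk$ is already an $H\undk$-module whose $\undpi_{-1}$ is $\overk$, and $H\undk$-module Eilenberg--MacLane objects are determined by their $\undk$-module homotopy. This gives $\Sigma^{-1}H\overk\simeq\Sigma^{1-\lambda}H\undk$, and combining it with \cref{prop:Hk_Anderson_dual} completes the proof.
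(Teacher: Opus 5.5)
Your final complex $\undk\to\undk[G]\xrightarrow{1-\gamma^{-1}}\undk[G]$ is exactly the reduced cellular cochain complex the paper uses, so the route you end on is the paper's. As written, though, the proposal misidentifies what is being computed, and the decisive Mackey-structure check is aimed at the wrong class.

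The trouble is not sign bookkeeping. The group $\undpi_n(\Sigma^{2-\lambda}H\undk)=[S^{n-2+\lambda},H\undk]\cong\widetilde{\und{H}}^{2-n}_G(S^\lambda;\undk)$ is cohomology of $S^\lambda$ from the outset; cohomological degree $m$ gives $\undpi_{\lambda-m}$, not $\undpi_{*-\lambda}$. Your chain complex instead computes $\undpi_m(S^\lambda\Smash H\undk)=\undpi_{m-\lambda}(H\undk)$, the homotopy of $\Sigma^{\lambda}H\undk$. That complex really does have nonzero homology in degrees $0$, $1$ and $2$ at every level $G/H$ with $p\mid |H|$, because there the first map is multiplication by $|H|$, which is $0$ in $k$. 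So your claimed exactness in degree $1$ also fails at those levels. For the same reason the fallback to $\Sigma^{\lambda-2}H\undk$ cannot work: that spectrum is not Eilenberg--MacLane when $p\mid n$. Likewise, the cochain terms must come from $[X_m/X_{m-1},\Sigma^mH\undk]$ (equivalently, from mapping Burnside-coefficient chains into $\undk$). Levelwise $k$-dualizing your $\undk$-chain complex would put $\overk$, not $\undk$, in degree $0$ and compute something else.

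The Mackey-structure check you sketch uses the norm element $N_H=\sum_{gH\in G/H}gH$. But the norm spans $\ker(1-\gamma)$, which is the top \emph{homology} of $S^\lambda$. There $\res^H_K N_H=N_K$ and $\tr_K^H N_K=[H:K]N_H$, so carried out honestly the check yields $\undk$, not $\overk$.

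What is needed is the following. At level $G/H$ the cochain complex is $k\xrightarrow{\Delta}k[G/H]\xrightarrow{1-\gamma}k[G/H]$, with $\Delta$ the diagonal (restrictions in $\undk$ are $1$).
\begin{enumerate}
\item $\ker(1-\gamma)$ consists of the $\gamma$-invariant elements. These are the constant ones, since $\gamma$ acts transitively on $G/H$, and so they are exactly the image of the injective map $\Delta$. Hence cohomology vanishes in degrees $0$ and $1$ at every level.
\item In degree $2$ the cohomology is the coinvariants $k[G/H]_G\cong k$, via the augmentation. The restriction $gH\mapsto\sum_{x\in H/K}gxK$ becomes multiplication by $[H:K]$, and the transfer $gK\mapsto gH$ becomes the identity. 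This is $\overk$.
\end{enumerate}
With that correction your argument is the paper's proof, and your closing remark about the $H\undk$-module structure is fine.
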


\begin{proof}
The equivalence follows from the equivalence in \cref{prop:Hk_Anderson_dual} and a computation of the reduced cellular cochains of $S^\lambda$. 

We aim to show $\Sigma^{2-\lambda} H\undk$ is Eilenberg--MacLane with $\und{\pi}_0(\Sigma^{2-\lambda} H\undk) \cong \overk$.
In order to show this, we compute that 
\[
\undpi^G_m(\Sigma^{2-\lambda} H\undk) \cong \und{\widetilde{H}}^{2-m}_G(S^\lambda;\undk) \cong \begin{cases}
 \overk & m = 0\\
 0 & \text{else.}
\end{cases}
\]

There is a $G$-CW complex structure on $S^\lambda$ consisting of two fixed 0-cells, an equivariant 1-cell $G/e \times D^1$, and an equivariant 2-cell $G/e \times D^2$. The filtration quotients are below.
\[
\begin{tikzcd}
\left(S^\lambda \right)_0 = S^0 \ar[r] & \left(S^\lambda \right)_1 \ar[r] \ar[d] & \left(S^\lambda \right)_2 = S^\lambda \ar[d] \\
& G_+ \Smash S^1 & G_+ \Smash S^2
\end{tikzcd}
\]
Denoting the fixed-point Mackey functor for the group ring $k[G]$ by $\undk[G]$ (as in \cref{ex:group_ring_Mackey}), the reduced Mackey valued cellular cochain complex $\widetilde{\und{C}_G^*}(S^\lambda; \undk)$ is
\[
\begin{tikzcd}
\undk  \ar[r,"\Delta"] & \undk[G] \ar[r,"1-\gamma"]  & \undk[G] 
\end{tikzcd}
\]
with cohomology concentrated in degree 2 given by $\overk$.  Thus, $\Sigma^{2-\lambda} H\undk$ is Eilenberg--MacLane and equivalent to $H\overk$.
\end{proof}

\begin{rem}
If $n=2$ in \cref{prop:Cn_Hk_self_dual} then $\lambda$ is a reducible representation, isomorphic to two copies of the sign representation. Otherwise, $\lambda$ is irreducible.
\end{rem} 

\begin{rem}\label[remark]{rem:not_canonical}
In \cref{prop:Cn_Hk_self_dual} we made use of the representation $\lambda=\lambda_1$ where the generator of $C_n$ acts by a rotation of $\frac{2\pi}n$ to show
\[
\Sigma^{2-\lambda}H\undk \simeq H\overk,
\]
but we could have chosen other representations. 
Let $\lambda_j$ be the representation $\mathbb{R}^2$ where the generator acts by rotation of $\frac{2\pi j}n$. 
Then if $j$ is relatively prime to $n$, we also have an equivalence
\[
\Sigma^{2-\lambda_j}H\undk \simeq H\overk
\]
and similarly
\[
\Sigma^{2-\lambda_j}H\undk \simeq H\overk.
\]
Thus \cref{prop:Cn_Hk_self_dual} gives Anderson self-duality, but not canonically. 
\end{rem}

More generally, $H\undk$ is Anderson self-dual whenever $G$ is a finite subgroup of $SO(3)$. 
We are grateful to the authors of \cite{HZAnderson} for sharing with us an early draft of forthcoming work that includes a generalization of the following result integrally and for $k = \F_2$. 
In fact, they classify precisely the groups for which $H\undz$ and $H\undt$ are Anderson self-dual. 
We include the proof for $G \subseteq SO(3)$ for completeness. 
Note that for $G = C_2 \times C_2$, Anderson self-duality of $H\und{\F}_2$ was first observed in \cite{GYK4Slice} (see also \cite{Ellis-Bloor}). 

\begin{thm}[\cite{HZAnderson}]
\label[thm]{thm:KKL_Hk_Anderson}
Let $G$ be any finite subgroup of $SO(3)$ and let $k$ be a finite field. 
Let $V$ be the restriction to $G$ of the standard $3$-dimensional representation of $SO(3)$. 
Then 
\[
I_\Z^G(H\undk) \simeq \Sigma^{-1}H\overk \simeq \Sigma^{2-V} H\undk.
\]
\end{thm}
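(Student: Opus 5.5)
The first equivalence $I_\Z^G(H\undk)\simeq \Sigma^{-1}H\overk$ is already \cref{prop:Hk_Anderson_dual}. It therefore suffices to prove
\[
\Sigma^{3-V}H\undk \simeq H\overk
\]
as $H\undk$-modules. As in the proof of \cref{prop:Cn_Hk_self_dual}, the plan is to compute the integer-graded homotopy Mackey functors
\[
\undpi^G_m(\Sigma^{3-V}H\undk)\cong \und{\widetilde H}{}^{3-m}_G(S^V;\undk).
\]
We aim to show these vanish for $m\neq 0$ and equal $\overk$ for $m=0$. Since a $G$-spectrum with homotopy concentrated in degree $0$ is Eilenberg--MacLane, and the identification is compatible with the $H\undk$-module structure, this gives the result.

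To compute the cohomology, use the geometry of $V$. The group $G\subseteq SO(3)$ acts on the unit sphere $S(V)=S^2$ by rotations, and $S^V$ is the unreduced suspension of $S(V)$. Hence
\[
\und{\widetilde H}{}^{n}_G(S^V;\undk)\cong \und{\widetilde H}{}^{n-1}_G(S(V)_+ \text{ reduced appropriately}),
\]
and more precisely there is a cofibre sequence $S(V)_+\to S^0\to S^V$. So it suffices to show the following two things.
\begin{enumerate}
\item The restriction $\undk=\und H^0_G(\pt)\to \und H^0_G(S(V);\undk)$ is an isomorphism.
\item $\und H^1_G(S(V);\undk)=0$, and $\und H^2_G(S(V);\undk)\cong\overk$.
\end{enumerate}

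For this, choose a $G$-CW structure on $S^2$ coming from a $G$-invariant triangulation, for example the one obtained from a fundamental domain, or a barycentric subdivision of a $G$-invariant polyhedron. Since $G$ acts by orientation-preserving rotations, every nontrivial isotropy group is cyclic and fixes a rotation axis. Such groups occur only on $0$-cells, and $1$- and $2$-cells can be taken free. Then the Mackey-valued cochains have the form
\[
\bigoplus_i \undk_{G/H_i}\to \bigoplus \undk[G]\to \bigoplus \undk[G],
\]
where $\undk_{G/H}$ denotes the fixed point functor of $k[G/H]$.

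Evaluating at $G/K$ gives the ordinary cellular cochains of the orbit space $S^2/K$ with $k$-coefficients, where the free cells contribute their $K$-orbits. Since $S^2/K$ is again a $2$-sphere (an oriented orbifold quotient), its cohomology is $k$ in degrees $0$ and $2$ and zero in degree $1$. This handles the levelwise computation.

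It remains to identify the restriction and transfer maps on $H^2$. The degree-$2$ class restricts along the branched cover $S^2/K\to S^2/H$ (for $K\le H$) via multiplication by the degree $[H:K]$, and the transfer is $1$. That is exactly $\overk$, while $H^0$ gives $\undk$. Orientation-preservation ensures there is no sign twist in the conjugation maps.

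The main obstacle I expect is making the Mackey structure on $H^2$ rigorous. Concretely, one must check that transfer on top cohomology is the identity and restriction is the index, which amounts to a degree computation for the quotient maps $S^2/K\to S^2/H$. I would verify this directly on cochains: the top cochains at level $G/K$ are $k[G]^K$ summed over free $2$-cells, restriction is inclusion of fixed points, and transfer is the relative trace. Summing the coefficients of a fundamental cycle then gives the claim.

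Finally, the long exact sequence of the cofibration $S(V)_+\to S^0\to S^V$ gives $\und{\widetilde H}{}^3_G(S^V;\undk)\cong\overk$ with all other degrees vanishing. Hence $\Sigma^{3-V}H\undk\simeq H\overk$, and desuspending once yields $\Sigma^{2-V}H\undk\simeq\Sigma^{-1}H\overk$.
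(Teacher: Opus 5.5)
Your proposal is correct and follows essentially the same route as the paper. Both reduce to $\Sigma^{3-V}H\undk\simeq H\overk$, use the cofibre sequence for $S(V)$, compute levelwise through the orbit spaces $S(V)/K\cong S^2$, and read off restrictions from the degree $[H:K]$ of the quotient maps; your cochain-level check of transfers and conjugations is more explicit than the paper's, and the one step you leave unjustified is why $S^2/K$ has genus $0$, which the paper gets from Riemann--Hurwitz (positive Euler characteristic) and which also follows from $H^1(S^2/K;\Q)\hookrightarrow H^1(S^2;\Q)=0$.
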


\begin{proof}
From \cref{prop:Hk_Anderson_dual}, the first equivalence holds for any finite group. 
Specializing to $G$ a finite subgroup of $SO(3)$, we want to show the second equivalence. 
It suffices to show that $H\overk \simeq \Sigma^{3-V} H\undk$. 
That is, we want to show $\Sigma^{3-V}H\undk$ is Eilenberg--MacLane with homotopy $\overk$ concentrated in degree 0. 
So we aim to show
\[
\undpi^G_m(\Sigma^{3-V} H\undk) \cong \und{\widetilde{H}}^{3-m}_G(S^V;\undk) \cong \begin{cases}
 \overk & m = 0\\
 0 & \text{else.}
\end{cases}
\]

The cofiber sequence
\[
S(V)_+ \to D(V)_+ \to S^V,
\]
induces a long exact sequence in reduced cohomology. 
Since $D(V)$ is equivariantly contractible, 
it suffices to show
\[
\und{\widetilde{H}}^{2-m}_G(S(V)_+;\undk) \cong \begin{cases}
 \overk & m = 0\\
 0 & \text{else.}
\end{cases}
\]

Since we are working with constant coefficients, the cohomology of any $G$-space $\widetilde{H}^n_G(X;\undk) \cong \widetilde{H}^n_e(X/G;k)$. 
So we can reduce the calculation to showing the following two claims hold:
\begin{enumerate}
    \item\label{item:HZ1} For any $H \subseteq G$, $S(V)/H$ is homeomorphic to $S^2$; and 
    \item\label{item:HZ2} For any $H \subset K \subset G$, the quotient map $S(V)/H \to S(V)/K$ is of degree
  $[K : H]$.
\end{enumerate}

Note that $SO(3)$ is generated by rotations, so $S(V)/H$ is an orientable topological
$2$-manifold. 
The Riemann--Hurwitz formula shows that the Euler characteristic of $S(V)/H$ is positive, so we must have (\ref{item:HZ1}). 
Since any $g \in G$
acts by rotation on $S(V)$, we can compute the local degree. 
There are only 2 fixed points, a generic point in $S(V)$ is free, and we have (\ref{item:HZ2}). 
This completes the proof.
\end{proof}

\begin{rem}
One way that $C_n$ is a finite subgroup of $SO(3)$ has $V=\lambda+1$. 
This gives an alternative proof of \cref{prop:Cn_Hk_self_dual}.
\end{rem}

\subsection{Anderson duality in the integral case}
We note here that Anderson self-duality occurs integrally as well. 
We mention this in passing for those interested in $H\undz$. 
However, since $\Z$ is not self-injective, these results do not lead to a self-injectivity result for $RO(G)$-graded homotopy.

A similar computation to \cref{prop:Hk_Anderson_dual} with \cref{eq:sesMackey} computes the Anderson dual of $H\undz$. Here the $\Ext$ terms vanish instead of the $\Hom$ terms.

\begin{prop}\label[proposition]{prop:HZ_Anderson_dual}
For $G$ a finite group, the equivariant Anderson dual of $H\undz$ is
\[
I_\Z^G(H\undz) \simeq H\overz 
\]
as an $H\undz$-module.
\end{prop}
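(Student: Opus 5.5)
We will follow the same strategy as in the proof of \cref{prop:Hk_Anderson_dual}, replacing the role of $\Ext^1$ with that of $\Hom$. The homotopy type of a $G$-spectrum is determined by its $\Z$-graded homotopy Mackey functors. It therefore suffices to show that $I_\Z^G(H\undz)$ has $\undpi_n$ concentrated in degree $0$, with value $\overz$ there. We will also need to check that this identification is compatible with the $H\undz$-module structure.

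First, specialize the short exact sequence \cref{eq:sesMackey} to $X = H\undz$ and $\alpha = n \in \Z$ a trivial (virtual) representation. This gives
\[
0 \to \Ext^1_{\Z}(\undpi_{-(n+1)}^G (H\undz), \Z) \to \undpi_{n}^G (I_\Z (H\undz)) \to \Hom_{\Z}(\undpi_{-n}^G (H\undz), \Z) \to 0,
\]
where $\Hom_\Z(-,\Z)$ and $\Ext^1_\Z(-,\Z)$ are applied levelwise as in \cref{def:dual_Mackey_functor} and \cref{rem:hom_Mackey_functor}.

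Since $\undpi_m^G(H\undz)$ is $\undz$ for $m=0$ and zero otherwise, the $\Ext$ term is nonzero only when $n=-1$. In that case it equals $\Ext^1_\Z(\undz,\Z)$, which is $0$ levelwise because $\Z$ is free. The $\Hom$ term is nonzero only for $n=0$, where it equals $\Hom_\Z(\undz,\Z) = \Rdual{\undz} = \overz$ by \cref{ex:constant_dual}. Hence $\undpi_n^G(I_\Z^G H\undz)$ vanishes for $n\neq 0$, and $\undpi_0^G(I_\Z^G H\undz)\cong \overz$ as Mackey functors. We must check that the restriction and transfer maps are the dualized ones. This follows because the map $\undpi_0(I_\Z X)\to \Hom_\Z(\undpi_0 X,\Z)$ is natural in the orbit $G/H$, so the transfers of $\overz$ are multiplication by $1$ and the restrictions are multiplication by the index. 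Consequently $I_\Z^G(H\undz)$ is an Eilenberg--MacLane spectrum, equivalent to $H\overz$.

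The main subtle point is the phrase ``as an $H\undz$-module''. Since $I_\Z^G(H\undz)=F(H\undz,I_\Z^G)$ is a function spectrum out of a ring spectrum, it is naturally an $H\undz$-module. An $H\undz$-module whose homotopy is concentrated in degree $0$ is determined by its $\undpi_0$ as a $\undz$-module, i.e.\ as a cohomological Mackey functor. This follows from the Postnikov $t$-structure on $H\undz$-modules, whose heart is $\undz$-modules in Mackey functors. So it remains to observe that $\overz$ is a $\undz$-module via the action on the source, which is precisely its dual module structure. This argument is identical to the $k$-linear case of \cref{prop:Hk_Anderson_dual}, so we would treat it with the same level of detail there.
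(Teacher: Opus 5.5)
Your proposal is correct and follows the paper's argument. You specialize the short exact sequence \eqref{eq:sesMackey} to integer degrees, note that the $\Ext^1_\Z$ terms vanish because $\undz$ is levelwise free, and read off $\overz$ from the $\Hom_\Z$ term in degree $0$. Your extra justification of the $H\undz$-module structure, via the heart of the Postnikov $t$-structure (where a $\undz$-module structure on a Mackey functor is just the property of being cohomological), fills in a point the paper leaves implicit.
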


A similar computation as in the proof of \cref{prop:Cn_Hk_self_dual}, computing the cohomology of $S^\lambda$ now with $\undz$-coefficients, shows that $C_n$-equivariantly $\Sigma^{2-\lambda} H\undz \simeq H\overz$, and thus $H\undz$ is Anderson self-dual. 

\begin{prop}
\label[proposition]{prop:Cn_HZ_self_dual}
Let $G = C_{n}=\langle\gamma\rangle$ and let $\lambda$ the be two dimensional real representation with $\gamma$ acting as rotation by $\frac{2\pi }{n}$. Then $H\undz$ is Anderson self-dual with
\[
I_\Z^G(H\undz) \simeq H\overz \simeq \Sigma^{2-\lambda} H\undz
\]
as an $H\undz$-module.
\end{prop}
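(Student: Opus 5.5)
The plan has two steps: first identify the Anderson dual of $H\undz$, then match it with a suspension of $H\undz$. By \cref{prop:HZ_Anderson_dual} we already have $I_\Z^G(H\undz) \simeq H\overz$, valid for any finite $G$. So the content is the second equivalence: we must show $\Sigma^{2-\lambda} H\undz$ is an Eilenberg--MacLane spectrum whose $\undpi_0$ is $\overz$. As in the proof of \cref{prop:Cn_Hk_self_dual}, write
\[
\undpi^G_m(\Sigma^{2-\lambda} H\undz) \cong \und{\widetilde{H}}^{2-m}_G(S^\lambda;\undz).
\]
The goal is then to show this Mackey functor is $\overz$ for $m=0$ and zero otherwise. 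The identification should be compatible with the $H\undz$-module structures, since both sides are $H\undz$-modules and the equivalence will be induced by an $H\undz$-module map. Concretely, an $H\undz$-module map $\Sigma^{2-\lambda}H\undz \to H\overz$ is a class in $\pi_0$ of a function spectrum, which we can read off from the cohomology computation.

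For the computation, use the same $G$-CW structure on $S^\lambda$: two fixed $0$-cells, one free $1$-cell $G/e\times D^1$ and one free $2$-cell $G/e \times D^2$. The reduced Mackey-valued cellular cochain complex with $\undz$-coefficients is
\[
\begin{tikzcd}
\undz \ar[r,"\Delta"] & \undz[G] \ar[r,"1-\gamma"] & \undz[G].
\end{tikzcd}
\]
Here $\undz[G]$ is the fixed point Mackey functor of $\Z G$, with value $\Z G$ at $G/e$ and $(\Z G)^K = \Z G^{\text{norm-ish}}$, i.e.\ $\Z$-multiples of $K$-orbit sums, at $G/K$. We then compute cohomology levelwise at each $G/K$, for $K \leq C_n$ of index $d$.

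\textbf{Degree 0.} $\Delta$ is injective, so $H^0=0$.

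\textbf{Degree 1.} The kernel of $1-\gamma$ on the $K$-fixed part consists of the $G$-invariant elements, i.e.\ multiples of the full norm element. These are exactly the image of $\Delta$, so $H^1=0$. This uses that the boundary of the $1$-cell hits the two $0$-cells with opposite signs, so $\Delta$ sends $1$ to the norm element.

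\textbf{Degree 2.} At level $G/K$, the cokernel of $1-\gamma$ on $(\Z G)^K \cong \Z[G/K]$ is $\Z[G/K]/(1-\gamma) \cong \Z$, via the augmentation. It is torsion-free, which is the integral point to check.

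Finally, we identify the restriction and transfer maps on these $\Z$'s. Restriction is inclusion of fixed points, $\Z[G/K]\hookrightarrow \Z[G/K']$, sending an orbit sum to a sum of $[K:K']$ smaller orbit sums; after augmentation it is multiplication by the index. The transfer is the relative trace, which sends an orbit sum to the corresponding larger orbit sum; after augmentation it is multiplication by $1$. This is precisely $\overz$, by \cref{ex:constant_dual}.

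The main obstacle is bookkeeping rather than conceptual. First, the identification of the cochain maps at each level must be done carefully, in particular the sign in $\Delta$ and the claim that $\ker(1-\gamma)$ equals the image of $\Delta$ integrally, not just rationally. Second, the restriction/transfer computation must produce $\overz$ rather than $\undz$. A further subtlety is upgrading the equivalence of underlying spectra to an equivalence of $H\undz$-modules. I would handle this by noting that $\Sigma^{2-\lambda}H\undz$ is an $H\undz$-module with homotopy concentrated in degree $0$, so it is the Eilenberg--MacLane $H\undz$-module on its $\undpi_0$, which is $\overz$ as an $\undz$-module. Combining with \cref{prop:HZ_Anderson_dual} then yields $I_\Z^G(H\undz)\simeq H\overz\simeq\Sigma^{2-\lambda}H\undz$.
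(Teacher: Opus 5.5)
Your proposal is correct and follows the paper's route: combine \cref{prop:HZ_Anderson_dual} with the $\undz$-coefficient version of the cellular cochain computation for $S^\lambda$ from \cref{prop:Cn_Hk_self_dual}. You carry that computation out correctly, including the integral points that $\ker(1-\gamma)$ is exactly $\Z\cdot N_G=\operatorname{im}\Delta$ and that the cokernel is $\Z$ at each level with restriction the index and transfer $1$, i.e.\ $\overz$; your heart argument for the $H\undz$-module structure also makes explicit a step the paper leaves implicit.
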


Moreover, the proof of \cref{thm:KKL_Hk_Anderson} adapts easily to the integral case for finite subgroups of $SO(3)$. A generalization will appear in \cite{HZAnderson}.

\begin{thm}[\cite{HZAnderson}]
\label[thm]{thm:KKL}
Let $G$ be any finite subgroup of $SO(3)$. 
Let $V$ be the restriction to $G$ of the standard $3$-dimensional representation of $SO(3)$. 
Then 
    \[
I_\Z^G(H\undz) \simeq H\overz \simeq \Sigma^{3-V} H\undz
\]
\end{thm}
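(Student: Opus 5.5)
The plan has two parts. First, \cref{prop:HZ_Anderson_dual} gives $I_\Z^G(H\undz) \simeq H\overz$. Second, we show directly that $\Sigma^{3-V}H\undz$ is an Eilenberg--MacLane spectrum with $\undpi_0 \cong \overz$, mirroring the proof of \cref{thm:KKL_Hk_Anderson}. Since homotopy types of $G$-spectra are detected by $\Z$-graded homotopy Mackey functors, it suffices to compute
\[
\undpi^G_m(\Sigma^{3-V}H\undz) \cong \und{\widetilde{H}}^{3-m}_G(S^V;\undz)
\]
and show it is $\overz$ for $m=0$ and zero otherwise.

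To compute these groups, I would use the cofiber sequence $S(V)_+ \to D(V)_+ \to S^V$ and the contractibility of $D(V)$, which reduce the problem to the unreduced cohomology of $S(V)$. For constant coefficients we have $\und{H}^n_G(X;\undz)(G/H) \cong H^n(X/H;\Z)$: restriction is induced by the quotient map $X/K' \to X/H'$, and transfer is the transfer of the branched covering. The two claims (1) and (2) from the proof of \cref{thm:KKL_Hk_Anderson} are purely topological, so they hold verbatim. They say that $S(V)/H \cong S^2$ for every $H \le G$, and that for $H \le K$ the map $S(V)/H \to S(V)/K$ has degree $[K:H]$.

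It follows that $H^0(S(V)/H;\Z)=\Z$, $H^2(S(V)/H;\Z)=\Z$, and all other groups vanish. In degree $0$ the Mackey functor is $\undz$: restriction is the identity and transfer is multiplication by the index. In degree $2$, restriction is multiplication by the degree $[K:H]$. For the transfer, we use that $\tr\circ\res$ acts on top cohomology as multiplication by the generic fiber cardinality $[K:H]$; since $H^2=\Z$ is torsion-free, this forces the transfer to be the identity. Hence the degree-$2$ Mackey functor is $\overz$.

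Now we track the long exact sequence. Reduced cohomology of $S^V$ in degree $3$ receives $\overz$ from $H^2(S(V))$. The degree-$0$ piece $\undz$ cancels against $H^0(D(V))=\undz$, because the restriction map $\undz \to \undz$ is an isomorphism. This gives $\und{\widetilde H}^{3}_G(S^V;\undz)\cong\overz$ and all other degrees zero. Taking $m=0$ yields $\Sigma^{3-V}H\undz \simeq H\overz$, and combining with \cref{prop:HZ_Anderson_dual} proves the theorem.

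I expect the main obstacle to be pinning down the transfer in degree $2$ integrally and verifying the quotient-cohomology identification with transfers, which the $\undk$ case glossed over. Here torsion-freeness makes the check rigid, which is a genuine help. The $H\undz$-module compatibility then follows because all the identifications are maps of $H\undz$-modules.
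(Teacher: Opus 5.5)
Your proposal is correct and is exactly the adaptation of the proof of \cref{thm:KKL_Hk_Anderson} that the paper indicates: the first equivalence comes from \cref{prop:HZ_Anderson_dual}, and the second from computing $\und{\widetilde H}^{*}_G(S^V;\undz)$ through $S(V)_+\to D(V)_+\to S^V$ and the fact that $S(V)/H\cong S^2$. Your torsion-freeness argument for the degree-$2$ transfers and your cancellation of the degree-$0$ term against $D(V)$ fill in details the paper leaves implicit. You should also state that the conjugations on $H^2$ are trivial; this follows the same way, since $G$ acts by rotations and restriction to $G/e$ is injective.
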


\section{Eilenberg--MacLane spectra and self-injectivity}\label[section]{sec:self-inj_EM_spectra}

In the $k$-linear case, Anderson self-duality implies self-injectivity of $RO(G)$-graded homotopy.

\begin{thm}\label[theorem]{thm:Anderson_self_dual_self-injective}
Let $G$ be a finite group and let $k$ be a finite field. 
Let $M$ be a finitely generated $k$-linear commutative Green functor such that the $G$-equivariant Eilenberg--MacLane spectrum $HM$ is Anderson self-dual, i.e.\
\[
I_{\Z}^G(HM) \simeq \Sigma^\beta HM
\]
for some $\beta \in RO(G)$. 
Then 
\begin{enumerate}
\renewcommand{\theenumi}{\alph{enumi}}
    \item the ring $\undpi_\bigstar^G (HM)$ is self-injective as a $\undpi_\bigstar^G (HM)$-module, and
    \item the ring $\pi_\bigstar^G (HM) (G/H)$ is self-injective as a left $\pi_\bigstar^G (HM)(G/H)$-module for each $H \leq G$.
\end{enumerate}
\end{thm}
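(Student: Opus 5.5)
The plan is to reduce both statements to the self-duality criterion of \cref{lem:tensor-hom}, in its shifted form from the subsequent remark. That is, I will exhibit an isomorphism of graded left modules
\[
\Hom_k^*(R,k)\cong \Sigma^{\gamma}R
\]
for a suitable shift $\gamma \in RO(G)$. Here $R$ is either the graded Mackey ring $\undpi_\bigstar^G(HM)$, viewed as a graded algebra over $\mu_k(G)$, or its level $R_H = \pi_\bigstar^G(HM)(G/H)$. Since $R$ is graded commutative and graded on the free abelian group $RO(G)$, left self-injectivity will give two-sided self-injectivity.

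\textbf{Step 1: identify the Anderson dual's homotopy.} By \cref{prop:k-linear_Tambara_Anderson_dual} we have $I_\Z^G(HM)\simeq\Sigma^{-1}H\kdual{M}$. More importantly, for any $X$ whose homotopy Mackey functors are finite $k$-vector spaces, the terms $\Hom_\Z(-,\Z)$ in \cref{eq:sesMackey} vanish. The remaining terms are $\Ext^1_\Z(V,\Z)\cong\Hom_\Z(V,\Q/\Z)\cong\Hom_k(V,k)$, the last isomorphism holding because $k$ is finite. I will first check that each $\undpi_\alpha^G(HM)$ is a finite $k$-vector space. This uses that $M$ is finitely generated, so every level is finite-dimensional, together with a cell structure on $S^\alpha$ with finitely many cells. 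The sequence \cref{eq:sesMackey} then gives
\[
\undpi_\alpha^G(I_\Z^G HM)\cong \kdual{\bigl(\undpi_{-\alpha-1}^G HM\bigr)}
\]
as Mackey functors, in the sense of \cref{def:dual_Mackey_functor}, and levelwise $\pi_\alpha^G(I_\Z^G HM)(G/H)\cong\Hom_k(\pi_{-\alpha-1}(HM)(G/H),k)$. I also need the isomorphism to be one of $\undpi_\bigstar(HM)$-modules. It should be, because $I_\Z^G HM=F(HM,I_\Z^G)$ is an $HM$-module via precomposition, and the identifications with $\Ext/\Hom$ are natural in $X$, hence compatible with multiplication by elements of $\pi_\bigstar HM$ acting on the source.

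\textbf{Step 2: combine with self-duality.} Anderson self-duality $I_\Z^G(HM)\simeq\Sigma^\beta HM$ gives $\undpi_{\alpha}(I_\Z^G HM)\cong\undpi_{\alpha-\beta}(HM)$. Combined with Step 1, this yields
\[
\kdual{R}\cong\Sigma^{\gamma}R, \qquad \gamma=-\beta-1
\]
(up to sign conventions on the grading reversal), levelwise at every $G/H$. I will need the equivalence $I_\Z^G(HM)\simeq\Sigma^\beta HM$ to be one of $HM$-modules. If it is only given as an equivalence of spectra, I will argue as follows: any equivalence with a shift of a free rank-one $HM$-module is determined by a unit class, hence can be taken $HM$-linear. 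Here "free rank-one" is justified because $\Sigma^\beta HM$ is $\Sigma^\beta$ of the unit and $F(HM,I_\Z)$ is an $HM$-module. This point is the main obstacle. Concretely, the equivalence $HM\to\Sigma^{-\beta}I_\Z^G HM$ should be produced as the $HM$-module map adjoint to the class of a generator in $\pi_{\beta}^G(I_\Z^G HM)$. One then checks on integer-graded homotopy Mackey functors that it is an equivalence, which follows since it agrees with the given one up to a unit.

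\textbf{Step 3: conclude.} For (b), apply the shifted form of \cref{lem:tensor-hom} to the graded $k$-algebra $R_H$, using $\Hom_k^*(R_H,k)\cong\Sigma^\gamma R_H$ as left modules. For (a), run the same tensor-hom argument in graded modules over $R$ in $\Mack_k(G)$, that is, graded $\mu_k(G)$-modules with an $R$-action via the box product. Here $\Hom_k(-,k)$ is the levelwise dual of \cref{def:dual_Mackey_functor}. Using $R\cong\Sigma^{-\gamma}\kdual R$, the internal Hom into $R$ becomes $\intHom_R(N,\kdual R)\cong\kdual{(R\Box_R N)}\cong\kdual N$. This functor is exact because levelwise $k$-duality is exact. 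Hence $R$ is injective in graded $R$-modules.
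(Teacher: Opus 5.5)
Your Steps 1 and 3 are the paper's argument. Torsion kills the $\Hom_\Z(-,\Z)$ term of \cref{eq:sesMackey}, and the $\Ext^1_\Z(-,\Z)$ term becomes $\Hom_k(-,k)$. Self-duality then turns this into $\Hom_k(\undpi^G_\bigstar HM,k)\cong\undpi^G_{-\bigstar-1-\beta}HM$ as modules, and the shifted \cref{lem:tensor-hom} finishes. Your handling of the Mackey-level internal Hom in (a) is more explicit than the paper's.

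What fails is your fallback in Step 2. The $HM$-linear map $\tilde f\colon\Sigma^\beta HM\to I^G_\Z HM$ adjoint to $u=f(\Sigma^\beta 1)$ acts on homotopy by $x\mapsto x\cdot u$. A bare equivalence of spectra $f$, by contrast, induces only an additive isomorphism. The two maps agree on $1$, and nothing forces them to agree elsewhere. Claiming they ``agree up to a unit'' presupposes that $u$ freely generates $\undpi^G_\bigstar(I^G_\Z HM)$ over $\undpi^G_\bigstar HM$. That is exactly the module-level self-duality you are trying to prove, so the argument is circular.

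In fact no such upgrade is possible in general. Take $G=e$ and $M=k[x,y]/(x,y)^2$. Then $I_\Z HM\simeq\Sigma^{-1}H\kdual{M}\simeq\Sigma^{-1}HM$ as spectra, since $\kdual{M}\cong M$ as abelian groups. Yet $\pi_\bigstar HM=M$ (in degree $0$) is not self-injective, because its socle $(x,y)$ is $2$-dimensional.

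So $HM$-linearity of $I^G_\Z(HM)\simeq\Sigma^\beta HM$ has to be part of the hypothesis rather than derived. This is how the paper uses it: it states \cref{prop:k-linear_Tambara_Anderson_dual} and its relatives ``as an $HM$-module'', and its proof asserts the isomorphism of $\undpi^G_\bigstar HM$-modules directly. For $M=\undk$ this costs nothing, since Eilenberg--MacLane $H\undk$-modules are determined by their homotopy $\undk$-module, and a $\undk$-module structure on a Mackey functor is unique when it exists. With that reading, delete the rigidification and your proof is the paper's.
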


\begin{proof}
For part (a), all values of $\undpi_{\bigstar}^G(HM)$ are finitely generated $k$-modules, and thus torsion. 
So letting $\alpha$ vary in \cref{eq:sesMackey} and re-indexing, we have an isomorphism of left $\undpi_{\bigstar}^G HM$-modules
\begin{align*}
\Ext_{\Z}^1(\undpi_{\bigstar}^G(HM),\Z) &\cong \undpi_{-\bigstar-1}^G(I_{\Z}^G(HM))\\
&\cong \undpi_{-\bigstar-1}^G(\Sigma^\beta HM)\\
&\cong \undpi_{-\bigstar-1-\beta}^G(HM).
\end{align*}
Observe that 
\[
\Ext_{\Z}^1(\undpi_{\bigstar}^G(HM),\Z) \cong \Hom_k(\undpi_{\bigstar}^G(HM), k)
\]
So there is an isomorphism of left $\undpi_{\bigstar}^G(HM)$-modules
\[
D \colon \Hom_k(\undpi_{\bigstar}^G(HM), k) \xrightarrow{\cong} \undpi_{-\bigstar-1-\beta}^G(HM).
\]
Finally, applying \cref{lem:tensor-hom} completes the proof.

For part (b), the same argument applies evaluating \cref{eq:sesMackey} at $G/H$. 
\end{proof}

Combining \cref{thm:Anderson_self_dual_self-injective} and the results of \cref{sec:Anderson}, we have the following for the Eilenberg--MacLane spectrum $H\undk$, with $\beta = 2-V$.

\begin{thm}\label[theorem]{thm:main_thm_SO3_self-inj}
Let $G$ be any finite subgroup of $SO(3)$ and let $k$ be a finite field. 
Let $V$ be the restriction to $G$ of the standard $3$-dimensional representation of $SO(3)$. 
Then $\undpi_\bigstar^G (H\undk)$ is self-injective. 
The map 
\[
D \colon \Hom_k(\undpi_{\bigstar}^G(H\undk), k) \xrightarrow{\cong} \undpi_{-\bigstar-3+V}^G(H\undk)
\]
is an isomorphism of left $\undpi_\bigstar^G (H\undk)$-modules. 
Moreover, $\undpi_\bigstar^G (H\undk)(G/H)$ is self-injective for every $H \leq G$, and the map 
\[
D \colon \Hom_k(\undpi_{\bigstar}^G(H\undk)(G/H), k) \xrightarrow{\cong} \undpi_{-\bigstar-3+V}^G(H\undk)(G/H)
\]
is an isomorphism of left $\undpi_\bigstar^G (H\undk)(G/H)$-modules. 
\end{thm}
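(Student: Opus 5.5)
The plan is to deduce this directly from \cref{thm:Anderson_self_dual_self-injective} applied to $M=\undk$, with $\beta = 2-V$ supplied by \cref{thm:KKL_Hk_Anderson}.

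First, I check the hypotheses. For $G$ a finite subgroup of $SO(3)$ and $k$ a finite field, $\undk$ is a $k$-linear commutative Green functor (indeed a Tambara functor). It is finitely generated, since it takes value $k$ at each of finitely many orbits. By \cref{thm:KKL_Hk_Anderson} we have $I_\Z^G(H\undk)\simeq \Sigma^{2-V}H\undk$, so $H\undk$ is Anderson self-dual with $\beta=2-V$. One thing needs care here: the equivalence should be one of $H\undk$-modules, so that the resulting isomorphism on homotopy is linear over $\undpi^G_\bigstar(H\undk)$. The first equivalence in \cref{thm:KKL_Hk_Anderson} is already stated as $H\undk$-modules by \cref{prop:Hk_Anderson_dual}. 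For the second, $\Sigma^{3-V}H\undk\simeq H\overk$, I would note that both sides are $H\undk$-modules with homotopy concentrated in degree $0$. An Eilenberg--MacLane $H\undk$-module is determined by its $\undpi_0$ as a $\undk$-module. The $\undk$-module structure on $\widetilde{\und H}^{3}_G(S^V;\undk)$ is the one on $\overk$, since restrictions act by the index, as claim (2) of that proof shows. So the equivalence upgrades to one of $H\undk$-modules.

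Next, I run the argument of \cref{thm:Anderson_self_dual_self-injective} with this $\beta$. All values of $\undpi^G_\bigstar(H\undk)$ are finite-dimensional over $k$, hence torsion, so the $\Hom_\Z(-,\Z)$ term in \cref{eq:sesMackey} vanishes. Combining this with $\Ext^1_\Z(N,\Z)\cong \Hom_k(N,k)$ for $k$-modules $N$ gives the map $D$. Substituting $\beta=2-V$ turns the degree $-\bigstar-1-\beta$ into $-\bigstar-3+V$, which is the displayed isomorphism. The identification $\Ext^1_\Z(N,\Z)\cong\Hom_\Z(N,\Q/\Z)\cong \Hom_k(N,k)$ requires fixing a nonzero $\F_p$-linear functional on $k$. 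Since $k$ is finite, hence a Frobenius (trace) extension of $\F_p$, this gives a natural isomorphism, and it is natural in $N$, so module structures are preserved.

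Finally, I apply \cref{lem:tensor-hom}, together with the following remark allowing a shift by $V-3\in RO(G)$, first to the Mackey-valued ring and then levelwise at each $G/H$ (evaluation is exact). Graded commutativity gives both left and right self-injectivity. The main obstacle is the module-linearity of $D$: the homotopy of the Anderson dual must carry its $\undpi_\bigstar(H\undk)$-action compatibly with the dual action on $\Hom_k$. I would handle this by viewing $I_\Z^G(H\undk)=F(H\undk,I_\Z^G)$ as an $H\undk$-module through the source, checking that the short exact sequence \cref{eq:sesMackey} is natural for this action, and then combining it with the module-level equivalence established above.
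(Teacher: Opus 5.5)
Your proposal is correct and follows the same route as the paper, which obtains this theorem by applying \cref{thm:Anderson_self_dual_self-injective} to $M=\undk$ with $\beta=2-V$ from \cref{thm:KKL_Hk_Anderson}, so that $-\bigstar-1-\beta=-\bigstar-3+V$. Your extra checks fill in points the paper leaves implicit: upgrading $\Sigma^{3-V}H\undk\simeq H\overk$ to an equivalence of $H\undk$-modules via the heart, and making $\Ext^1_\Z(N,\Z)\cong\Hom_k(N,k)$ natural through the trace $k\to\F_p$.
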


\begin{rem}
For $G=C_2$ and $k=\F_2$, this recovers both the self-injectivity of $\M_2 = \pi_{\bigstar}^{C_2}(H\undt)$ shown in \cite[Proposition 4.2]{CMay} and of $\und{\M}_2 = \undpi_{\bigstar}^{C_2}(H\undt)$ shown in \cite[Theorem 4.6.10]{RaiesThesis} without computing the rings or classifying graded ideals. 
For $C_2$, we can take $V=2\sigma+1$ and the isomorphism at $G/G$
\[
D \colon \Hom_{\F_2}(\pi_{\bigstar}^G(H\undt), \F_2) \xrightarrow{\cong} \pi_{-\bigstar-2+2\sigma}^G(H\undt)
\]
sends $1^* \mapsto \theta$.
\end{rem}

\begin{rem}
For $G=C_3$ and $k=\F_3$, since $\M_3$ is self-injective, all $\Ext$ groups $\Ext^{*,*}_{\M_3}(-,\M_3)$ vanish, generalizing \cite[Lemma 5.5]{PohlandC3}. 
Taking $V=\lambda+1$ and the isomorphism at $G/G$
\[
D \colon \Hom_{\F_3}(\pi_{\bigstar}^G(H\und{\F}_3), \F_3) \xrightarrow{\cong} \pi_{-\bigstar-2+\lambda}^G(H\und{\F}_3)
\]
sends $1^* \mapsto w$ (up to multiplication by a unit).
\end{rem}

Recall from \cref{rem:not_canonical}, the choice of $V$ is not always unique and so neither is the choice of $\beta$.

\begin{exmp}
Let $G = C_n$ with distinct choices $\lambda_i \ncong \lambda_j$, where both $i$ and $j$ are relatively prime to $n$. 
The two isomorphisms at $G/G$ using $V = \lambda_i+1$
\[
D_i \colon \Hom_{k}(\pi_{\bigstar}^G(H\undk), k) \xrightarrow{\cong} \pi_{-\bigstar-2+\lambda_i}^G(H\undk),
\]
and $V = \lambda_j+1$
\[
D_j \colon \Hom_{k}(\pi_{\bigstar}^G(H\undk), k) \xrightarrow{\cong} \pi_{-\bigstar-2+\lambda_j}^G(H\undk),
\]
show that
$\pi_{\bigstar}^G(H\undk)$ is self-dual in multiple ways.
Moreover, the isomorphisms combine to show that, for example,   $\pi_{\lambda_i}^G(H\undk) \cong \pi_{\lambda_j}^G(H\undk)$. 
Thus one can reduce the computation of homotopy to a subgroup of $RO(G)$. 
\end{exmp}

This phenomenon can also be seen in mixed characteristic, where the $RO(G)$-graded homotopy is less interesting.

\begin{exmp}
Let $G = C_2$ and let $k$ be a field of characteristic $3$. 
Then at $G/G$ the bigraded homotopy $\pi_{\bigstar}^G(H\undk) \cong k[x,x^{-1}]$ is a graded field. This is immediately self-injective. 
Letting $\beta = 1-2\sigma$ as above, the isomorphism from \cref{thm:Anderson_self_dual_self-injective} at $G/G$ is
\[
D \colon \Hom_{\F_2}(\pi_{\bigstar}^G(H\undt), \F_2) \xrightarrow{\cong} \pi_{-\bigstar-2+2\sigma}^G(H\undt).
\]
Using instead $\beta = -1$ since $\undk \cong \overk$, the isomorphism at $G/G$ is
\[
D \colon \Hom_k(\undpi_{\bigstar}^G(H\undk), k) \xrightarrow{\cong} \undpi_{-\bigstar}^G(H\undk).
\]
with no shift.
\end{exmp}

More generally, self-injectivity holds for self-dual Mackey functors.

\begin{thm}
Let $G$ be a finite group and let $k$ be a finite field. Let $M$ be a finitely generated self-dual $k$-linear commutative Green functor. Then $\undpi_{\bigstar}^G(HM)$ self-injective. Furthermore, $\undpi_\bigstar^G (HM)(G/H)$ is self-injective for every $H \leq G$.
\end{thm}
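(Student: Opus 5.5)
The plan is to reduce this statement directly to \cref{thm:Anderson_self_dual_self-injective} by way of \cref{cor:self-dual_Tambara}.

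\emph{Step 1.} Since $M$ is a self-dual $k$-linear commutative Green functor, i.e.\ $M \cong \kdual{M}$, \cref{cor:self-dual_Tambara} (which rests on \cref{prop:k-linear_Tambara_Anderson_dual}) gives
\[
I_\Z^G(HM) \simeq \Sigma^{-1} H\kdual{M} \simeq \Sigma^{-1} HM .
\]
So $HM$ is Anderson self-dual with $\beta = -1 \in RO(G)$, a trivial virtual representation.

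\emph{Step 2.} We now apply \cref{thm:Anderson_self_dual_self-injective} with $\beta=-1$. Part (a) gives self-injectivity of $\undpi^G_\bigstar(HM)$, and part (b) gives it for each level $\undpi^G_\bigstar(HM)(G/H)$. Concretely, there is an isomorphism of left modules
\[
D\colon \Hom_k(\undpi^G_\bigstar(HM),k)\xrightarrow{\cong}\undpi^G_{-\bigstar}(HM),
\]
with no shift. Feeding this into \cref{lem:tensor-hom}, together with the remark that a shift does not affect injectivity, yields left self-injectivity. Right self-injectivity follows from graded commutativity: the $RO(G)$-grading is on a free abelian group, and $M$ is commutative, so $HM$ is a commutative ring spectrum.

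\emph{Main obstacle.} The one point needing care is that the isomorphism $M\cong\kdual{M}$ must induce an equivalence $H\kdual{M}\simeq HM$ of $HM$-modules, not merely of spectra. Only then is the identification in Step 2 linear over $\undpi_\bigstar^G(HM)$, which is what \cref{lem:tensor-hom} requires.

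To check this, I would interpret ``self-dual'' as an isomorphism of $M$-modules, with $\kdual{M}$ carrying its natural $M$-module structure. This holds in the examples: \cref{ex:kG-fixed-point_self-dual} and \cref{ex:C_p_Burnside_self-dual} via the Frobenius-type pairings. Then the equivalence of Eilenberg--MacLane $HM$-modules follows, since $HM$-modules with homotopy concentrated in degree $0$ are determined by their $\undpi_0$ as $M$-modules. Finite generation of $M$ guarantees that every level of $\undpi^G_\bigstar(HM)$ is a finite $k$-module, so the $\Ext^1_\Z(-,\Z)\cong\Hom_k(-,k)$ identification used in the proof of \cref{thm:Anderson_self_dual_self-injective} applies.
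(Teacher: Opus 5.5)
Your proposal is correct and follows the paper's proof: \cref{cor:self-dual_Tambara} gives Anderson self-duality with $\beta=-1$, and \cref{thm:Anderson_self_dual_self-injective} then yields both conclusions. Your extra remark also makes explicit a point the paper leaves implicit: $M\cong\kdual{M}$ should be an isomorphism of $M$-modules, so that $D$ is linear over $\undpi_\bigstar^G(HM)$.
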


\begin{proof}
    This follows directly from \cref{thm:Anderson_self_dual_self-injective} because $HM$ is Anderson self-dual for any self-dual $k$-linear commutative Green functor $M$ by \cref{cor:self-dual_Tambara}.
\end{proof}

As an immediate corollary, using \cref{ex:kG-fixed-point_self-dual} we have self-injectivity for the homotopy of $H\undk[G]$.

\begin{cor}
    Let $G$ be a finite group and $k$ a finite field. The $RO(G)$-graded homotopy of $H\undk[G]$ is self-injective.
\end{cor}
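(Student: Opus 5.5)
The corollary should follow by chaining two results already in the paper: first check that $\undk[G]$ satisfies the hypotheses of the theorem immediately preceding, then apply that theorem. Concretely, I would verify that $\undk[G]$ is a finitely generated $k$-linear commutative Green functor which is self-dual, i.e.\ $\kdual{\undk[G]} \cong \undk[G]$. Then \cref{cor:self-dual_Tambara} gives $I_\Z^G(H\undk[G]) \simeq \Sigma^{-1} H\undk[G]$, and \cref{thm:Anderson_self_dual_self-injective} with $\beta = -1$ gives self-injectivity of $\undpi_\bigstar^G(H\undk[G])$ and of each level $\undpi_\bigstar^G(H\undk[G])(G/H)$.

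The first step is routine. $\undk[G] = FP_{kG}$ is the fixed point functor of a commutative $k$-algebra with $G$ acting by conjugation, which is trivial here since $kG$ is commutative only when $G$ is abelian. I would therefore read $kG$ as the permutation module $k[G]$ with $G$ acting on itself by left translation, equivalently $k$-valued functions on $G$. This is a commutative $k$-algebra with pointwise product on which $G$ acts by algebra automorphisms, so its fixed point functor is a commutative Green functor, indeed a Tambara functor. It is finitely generated because each level $k[G]^H \cong k[H\backslash G]$ is finite dimensional over $k$.

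The substantive step is self-duality, asserted in \cref{ex:kG-fixed-point_self-dual}. I would prove it directly by writing $FP_{k[G]}(G/H) = k[G]^H$, which has basis the $H$-orbit sums $\sigma_{Hg}$. Then I would use the $G$-invariant nondegenerate bilinear form $\langle g, g'\rangle = \delta_{g,g'}$ on $k[G]$ and restrict it to fixed points as $\langle x, y\rangle_H = \langle x, y\rangle$. The key check is that, for $K \leq H$, restriction (inclusion $k[G]^H \hookrightarrow k[G]^K$) is adjoint to the relative trace $\tr_K^H$. That is,
\[
\langle \tr_K^H x, y\rangle = \sum_{h\in[H/K]} \langle hx, y\rangle = \sum_{h} \langle x, h^{-1}y\rangle = [H:K]\langle x,y\rangle
\]
for $y$ that is $H$-fixed. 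This computation is wrong, so the form must be normalized: on $k[G]^H$ one should instead pair orbit sums by $\langle \sigma_{Hg}, \sigma_{Hg'}\rangle_H = \delta_{Hg,Hg'}$, which is the pairing $\langle x,y\rangle_H = \sum_{Hg\in H\backslash G} x(g)y(g)$ computed on orbit representatives. With this normalization, $\langle \tr_K^H x, y\rangle_H = \langle x, \res_K^H y\rangle_K$ follows by checking on basis elements, since $\tr_K^H \sigma_{Kg} = \sigma_{Hg}$ and $\res \sigma_{Hg}$ is the sum of the $K$-orbits in $Hg$. Each pairing is perfect because it is diagonal in a basis, and it is compatible with the conjugations $c_g$. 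This gives $\kdual{\undk[G]} \cong \undk[G]$ as Mackey functors. Verifying this adjunction, and that no index factors appear so that the argument works in characteristic $p$, is the main obstacle. The cleanest check is that $k[G]$ is a permutation module, so the fixed point functor is the Burnside-type functor $k[G/-]$ attached to the free $G$-set $G$, whose duality is given by the orbit-basis pairing above.

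Finally, the identification $\kdual{M}\cong M$ is only needed as Mackey functors (equivalently as $HM$-modules) for \cref{cor:self-dual_Tambara}. Invoking the previous theorem then finishes the proof.
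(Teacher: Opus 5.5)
Your route is the paper's: the corollary is \cref{ex:kG-fixed-point_self-dual} combined with the preceding theorem, and your orbit-basis pairing supplies the self-duality that the paper only asserts. Reading $\undk[G]$ as the fixed point functor of $\mathrm{Map}(G,k)$, with pointwise product and translation action, is the right interpretation. The sentence about $G$ acting on $kG$ by conjugation is garbled and can be dropped.

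\textbf{The gap.} Your closing claim is that $\kdual{M}\cong M$ is only needed as Mackey functors, ``equivalently as $HM$-modules''. These are not equivalent. The heart of $HM$-modules is the category of $M$-modules. So an $HM$-module equivalence $H\kdual{M}\simeq HM$ amounts to an $M$-linear isomorphism $M\cong\kdual{M}$, where $M$ acts on $\kdual{M}$ by $(a\cdot f)(n)=f(an)$. A bare Mackey functor isomorphism gives only an equivalence of spectra.

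The proof of \cref{thm:Anderson_self_dual_self-injective} needs the module version. This is because \cref{lem:tensor-hom} requires $\Hom_k(R,k)$ to be a shift of $R$ as a left $R$-module, where $R=\undpi_\bigstar^G(HM)$. Without it the conclusion is false. For $G=e$, every finite-dimensional commutative $k$-algebra $A$ is isomorphic to $\kdual{A}$ as a vector space. But $A=k[x,y]/(x,y)^2$ has two-dimensional socle and is not self-injective.

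\textbf{The repair.} For $\undk[G]$ the fix is one line, but it must be made. On $\undk[G](G/H)\cong\mathrm{Map}(H\backslash G,k)$, your pairing is $\langle x,y\rangle_H=\epsilon_H(xy)$, with $\epsilon_H(f)=\sum_{Hg\in H\backslash G}f(Hg)$ and pointwise product. Hence $\langle ax,y\rangle_H=\langle x,ay\rangle_H$, so $y\mapsto\langle y,-\rangle_H$ is levelwise $\undk[G]$-linear. Combined with your checks against restriction, transfer and conjugation, it is an isomorphism $\undk[G]\cong\kdual{\undk[G]}$ of $\undk[G]$-modules.

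This is the sense in which \cref{cor:self-dual_Tambara} must be applied, giving $I_\Z^G(H\undk[G])\simeq\Sigma^{-1}H\undk[G]$ as $H\undk[G]$-modules. After that, \cref{thm:Anderson_self_dual_self-injective} with $\beta=-1$ finishes the proof.
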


Similarly, for $G=C_p$, the $k$-linear Burnside Tambara functor is self-dual and so the homotopy of $H\und{A}_k$ is self-injective. See \cref{ex:C_p_Burnside_self-dual}.

\begin{cor}
    Let $G=C_p$ and let $k$ be a field of characteristic $p$. The $RO(G)$-graded homotopy of $H\und{A}_k$ is self-injective.
\end{cor}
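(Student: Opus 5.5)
The plan is to reduce the statement to the theorem just proved for self-dual Green functors. That theorem in turn rests on \cref{cor:self-dual_Tambara} and \cref{thm:Anderson_self_dual_self-injective}. So the only real content is to verify \cref{ex:C_p_Burnside_self-dual}: there is an isomorphism $\und{A}_k \cong \kdual{\und{A}_k}$ of $k$-linear $C_p$-Mackey functors that is compatible with the $\und{A}_k$-module structures. As elsewhere in the paper, where Anderson duality is used, I take $k$ to be a finite field of characteristic $p$. Finite generation is automatic, since both levels of $\und{A}_k$ are finite-dimensional.

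\textbf{Step 1: the structure of $\und{A}_k$.} At $C_p/e$ the value is $k$, spanned by $1$. At $C_p/C_p$ the value is $k\{1,t\}$, where $t=[C_p/e]$. The structure maps are:
\begin{itemize}
\item restriction: $\res(1)=1$ and $\res(t)=p=0$;
\item transfer: $\tr(1)=t$;
\item conjugation: trivial, since $C_p$ is abelian.
\end{itemize}
Multiplicatively $t^2=pt=0$, so $\und{A}_k(C_p/C_p)\cong k[t]/t^2$.

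\textbf{Step 2: the dual.} By \cref{def:dual_Mackey_functor}, $\kdual{\und{A}_k}$ has values $k\{e^*\}$ at $C_p/e$ and $k\{1^*,t^*\}$ at $C_p/C_p$. Its restriction is $\tr^*$ and its transfer is $\res^*$. Computing these:
\begin{itemize}
\item $\tr^*(t^*)=e^*$ and $\tr^*(1^*)=0$;
\item $\res^*(e^*)=1^*$, because $e^*\circ\res$ sends $1\mapsto 1$ and $t\mapsto 0$.
\end{itemize}

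\textbf{Step 3: the isomorphism.} Define $\varphi\colon \und{A}_k\to\kdual{\und{A}_k}$ levelwise by
\[
1\mapsto e^* \text{ at } C_p/e, \qquad 1\mapsto t^*,\ t\mapsto 1^* \text{ at } C_p/C_p .
\]
Each level is a bijection. The compatibility checks are routine comparisons of the tables in Steps 1 and 2:
\begin{itemize}
\item restriction: $\varphi(\res 1)=e^*=\tr^*(t^*)$ and $\varphi(\res t)=0=\tr^*(1^*)$;
\item transfer: $\varphi(\tr 1)=1^*=\res^*(e^*)$;
\item conjugation: trivial.
\end{itemize}
So $\varphi$ is an isomorphism of Mackey functors.

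\textbf{Step 4: module structure.} The point I expected to be the main obstacle is that $\varphi$ must be an isomorphism of $\und{A}_k$-modules, not merely of Mackey functors. This turns out to be automatic. Every $k$-linear Mackey functor has a unique $\und{A}_k$-module structure, since $\und{A}_k$ is the unit for the box product and $\Mack_k(G)\simeq\und{A}_k\MMod$. Hence every map of Mackey functors is $\und{A}_k$-linear.

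As a sanity check at the top level, $(t\cdot t^*)(x)=t^*(tx)$ gives $t\cdot t^*=1^*=\varphi(t\cdot 1)$. Similarly $t\cdot 1^*=0=\varphi(t^2)$.

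\textbf{Step 5: conclusion.} Since $\und{A}_k\cong\kdual{\und{A}_k}$, \cref{cor:self-dual_Tambara} gives
\[
I_\Z^G(H\und{A}_k)\simeq\Sigma^{-1}H\und{A}_k ,
\]
so $H\und{A}_k$ is Anderson self-dual with $\beta=-1$. Then \cref{thm:Anderson_self_dual_self-injective} (equivalently, the preceding theorem on self-dual Green functors) yields self-injectivity of $\undpi_\bigstar^{C_p}(H\und{A}_k)$ and of each level $\undpi_\bigstar^{C_p}(H\und{A}_k)(C_p/H)$.
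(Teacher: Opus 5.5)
Your proof is correct and follows the paper's route. The paper cites the self-duality of $\und{A}_k$ (\cref{ex:C_p_Burnside_self-dual}) and applies the theorem on self-dual Green functors, i.e.\ \cref{cor:self-dual_Tambara} with \cref{thm:Anderson_self_dual_self-injective}. You additionally write out the explicit isomorphism $\varphi\colon\und{A}_k\cong\kdual{\und{A}_k}$ and correctly observe that $\und{A}_k$-linearity is automatic. Your explicit assumption that $k$ is finite is the same restriction the paper's argument tacitly needs, since the theorem it invokes is stated only for finite fields.
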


\bibliographystyle{amsalpha}
\bibliography{refs.bib}
\end{document}